\definecolor{darkblue}{rgb}{0.02,0.14,0.3}
\newtheorem{theorem}{Theorem}[section]
\newtheorem{algorithm}{Algorithm}
\crefname{proposition}{Proposition}{Propositions}
\Crefname{proposition}{Proposition}{Propositions}
\crefname{cor}{Corollary}{Corollaries}
\Crefname{cor}{Corollary}{Corollaries}
\crefname{table}{Table}{Tables}
\Crefname{table}{Table}{Tables}
\crefname{figure}{Figure}{Figures}
\Crefname{figure}{Figure}{Figures}
\crefname{section}{Section}{Sections}
\Crefname{section}{Section}{Sections}
\crefname{appendix}{Appendix}{Appendices}
\Crefname{appendix}{Appendix}{Appendices}
\crefname{equation}{equation}{equation}
\Crefname{equation}{Equation}{Equation}
\numberwithin{equation}{section}
\begin{document}

\title{Anderson acceleration with adaptive relaxation for convergent fixed-point iterations}
\author{Nicolas Lepage-Saucier\thanks{Concordia University\newline Department of
Economics\newline Sir George William Campus\newline1455 De Maisonneuve Blvd.
W.\newline Montreal, Quebec, Canada\newline H3G 1M8\medskip\newline
nicolas.lepagesaucier@concordia.ca}}
\date{August 2024}
\maketitle

\begin{abstract}
Two adaptive relaxation strategies are proposed for Anderson acceleration. They are specifically designed for applications in which mappings converge to a fixed point. Their superiority over alternative Anderson acceleration is demonstrated for linear contraction mappings. Both strategies perform well in three nonlinear fixed-point applications that include partial differential equations and the EM algorithm. One strategy surpasses all other Anderson acceleration implementations tested in terms of computation time across various specifications, including composite Anderson acceleration.

\medskip

%

\noindent
\textit{Keywords:} Anderson acceleration, Fixed-point iteration, Optimal relaxation

\end{abstract}

\section{Introduction}

Anderson acceleration (AA), also referred to as Anderson mixing, was
formulated by Donald Anderson in 1965 \cite{Anderson1965} to solve integration
problems numerically. Since then, its use has expanded to a wide array of
problems in physics, mathematics and other disciplines. It is very close to
methods developed in other contexts such as Pulay mixing (direct inversion in
the iterative subspace) in chemistry \cite{Pulay1980} or the generalized
minimal residual method (GMRES) and its predecessors like the generalized
conjugate residual method (GCR) in the linear case (see \cite{Saad1986},
\cite{Walker2011} and \cite{Potra2013} for comparisons). There are two
versions of the methods, related to type I and type II Broyden methods. This
paper focuses on the latter.

Consider the mapping $g:%
\mathbb{R}
^{n}\rightarrow%
\mathbb{R}
^{n}$ with at least one fixed point: $g(x^{\ast})=x^{\ast}$. In what follows,
$f\left(  x\right)  =$ $g(x)-x$ can be interpreted as a residual of $x$,
$\left\Vert x\right\Vert \equiv\left\Vert x\right\Vert _{2}=\sqrt
{x^{\intercal}x}$ is the Euclidean$\ $norm of a vector $x$, and $\langle
x,y\rangle=x^{\intercal}y$ is the inner product of the vectors $x$ and $y$.

The AA algorithm can be described as follows.

\begin{algorithm}%
\label{algo:AA}%
Input: a mapping $g:%
\mathbb{R}
^{n}\rightarrow%
\mathbb{R}
^{n}$, a starting point $x_{0}\in%
\mathbb{R}
^{n}$, and an integer $1\leq m\leq n$
\end{algorithm}

\vspace*{-0.5cm}%

\noindent\makebox[\linewidth]{\rule{\textwidth}{0.4pt}}%
%

\renewcommand{\arraystretch}{1.2}%
%

\noindent
\begin{tabular}
[c]{p{0.6cm}l}%
\multicolumn{1}{l}{1} & Set $x_{1}=g(x_{0})$\\
2 & for $k=1,2,...$ until convergence\\
\multicolumn{1}{l}{3} & \qquad Compute $g\left(  x_{k}\right)  $\\
4 & \qquad Compute $(\alpha_{1}^{(k)},...,\alpha_{m_{k}}^{(k)})$ that solve\\
& \qquad\quad$\min_{\alpha_{1}^{(k)},...,\alpha_{m_{k}}^{(k)}}\left\Vert
\sum_{i=1}^{m_{k}}\alpha_{i}^{(k)}f\left(  x_{k-m_{k}+i}\right)  \right\Vert $
s.t. $\sum_{i=1}^{m_{k}}\alpha_{i}^{(k)}=1$\\
\multicolumn{1}{l}{5} & \qquad Compute $\overline{x}_{k}=\sum_{i=1}^{m_{k}%
}\alpha_{i}^{(k)}x_{k-m_{k}+i}\text{ and }\overline{y}_{k}=\sum_{i=1}^{m_{k}%
}\alpha_{i}^{(k)}g\left(  x_{k-m_{k}+i}\right)  $\\
\multicolumn{1}{l}{6} & \qquad Set $x_{k+1}=\overline{x}_{k}+\beta_{k}%
\cdot\left(  \overline{y}_{k}-\overline{x}_{k}\right)  $\\
\multicolumn{1}{l}{7} & end for
\end{tabular}
%

\renewcommand{\arraystretch}{1}%
%

\noindent\makebox[\linewidth]{\rule{\textwidth}{0.4pt}}%

\vspace*{0.1cm}

The number of lags $1\leq m_{k}\leq\min\left(  k,m\right)  $ is usually set as
high as possible to accelerate convergence while providing an acceptable
conditioning of the linear problem for numerical stability.

The scalar $\beta_{k}$ is the relaxation (damping, mixing) parameter. In most
AA applications, it is usually stationary and is often set to 1. As discussed
by Anderson in \cite{Anderson2019}, the choice of $\beta_{k}=1$ is natural
under the implicit assumption that $g$ converges to a fixed point $x^{\ast}$.
In this case, $g(x_{k})$ should be closer to $x^{\ast}$ than $x_{k}$ and,
correspondingly, $\overline{y}_{k}$ should be closer to $x^{\ast}$ than
$\overline{x}_{k}$, provided that $\overline{y}_{k}$ is a good approximation
of $g(\overline{x}_{k})$. This assumption is of course not always valid.
Smaller $\beta_{k}$ have sometimes been shown to help stability and improve
global convergence for challenging applications (see, for instance,
\cite{Pollock2021} and \cite{Warnock2021}). But for fixed-point iterations
which do converge to a fixed point, this property can be exploited to compute
adaptive relaxation parameters which may significantly speed up the
convergence of AA.

The idea of AA with\ dynamically adjusted relaxation is a relatively recent
one. In 2019, Anderson \cite{Anderson2019} outlined an algorithm for adjusting
$\beta_{k}$ by monitoring the convergence of the Picard iteration for $g(x)$
over multiple iterations. Focusing on linearly-converging fixed-point methods,
Evans et al. \cite{Evans2020} used the optimization gain $\theta_{k+1}$ from
an AA step to set $\beta_{k}=0.9-\theta_{k+1}/2$. They show that this choice
of relaxation parameter improved robustness and efficiency over constant
relaxation with $\beta=0.4$ and $\beta=.75$ in various applications. Recently,
Jin et al. \cite{Jin2024} incorporated a dynamically adjusted $\beta_{k}$ in
their Anderson acceleration of the derivative-free projection method.

In 2013, Potra and Engler \cite{Potra2013} extended the results of
\cite{Walker2011} by characterizing the behavior of AA for linear systems with
arbitrary non-zero relaxation parameters. While doing so, they also suggested
a simple procedure to compute an optimal $\beta_{k}$ minimizing the residual
of $x_{k+1}$. Chen and Vuik \cite{Chen2024} extended their method to nonlinear
problems with a locally optimal $\beta_{k}$ that minimizes $\left\Vert
f\left(  x_{k+1}\right)  \right\Vert $. Using the approximation $g\left(
\overline{x}_{k}+\beta_{k}\left(  \overline{y}_{k}-\overline{x}_{k}\right)
\right)  \approx g\left(  \overline{x}_{k}\right)  +\beta_{k}\left(  g\left(
\overline{y}_{k}\right)  -g\left(  \overline{x}_{k}\right)  \right)  $, this
optimal $\beta_{k}$ is%
\begin{equation}
\beta_{k}^{\ast}=\arg\min_{\beta}\left\Vert f\left(  \overline{x}_{k}\right)
+\beta\cdot\left(  f\left(  \overline{y}_{k}\right)  -f\left(  \overline
{x}_{k}\right)  \right)  \right\Vert =-\frac{\left\langle f\left(
\overline{y}_{k}\right)  -f\left(  \overline{x}_{k}\right)  ,f\left(
\overline{x}_{k}\right)  \right\rangle }{\left\Vert f\left(  \overline{y}%
_{k}\right)  -f\left(  \overline{x}_{k}\right)  \right\Vert ^{2}}
\label{eq:opt_beta}%
\end{equation}
and the optimal AA update is
\begin{equation}
x_{k+1}^{0}=\overline{x}_{k}+\beta_{k}^{\ast}\left(  \overline{y}%
_{k}-\overline{x}_{k}\right)  . \label{eq:opt_update}%
\end{equation}

This scheme will be noted AAopt0. In various numerical applications, Chen and
Vuik show that AAopt0 needs fewer iterations to converge compared to regular
AA. Since AAopt0 needs two extra maps per iteration, $g\left(  \overline
{x}_{k}\right)  $ and $g\left(  \overline{y}_{k}\right)  $, they suggest using
their method for applications for which mappings are inexpensive to compute.
Otherwise, the potential efficiency gains from a smaller number of iterations
may be offset by the extra computation per iteration.

\subsection{Locally optimal relaxation for convergent mapping applications}

Two modifications to AAopt0 are now proposed to improve its performance for
convergent mapping iterations. First, note that (\ref{eq:opt_beta}) and
(\ref{eq:opt_update}) constitute by themselves a mini order-1 AA iteration.
Using $\overline{x}_{k}$ and $\overline{y}_{k}$ to compute $x_{k+1}^{0}$ means
implicitly choosing full relaxation ($\beta=0$) for this mini AA iteration.
But for convergent mappings, we can hope that $g\left(  \overline{x}%
_{k}\right)  $ and $g\left(  \overline{y}_{k}\right)  $ are closer to
$x^{\ast}$ than $\overline{x}_{k}$ and $\overline{y}_{k}$, respectively. To
take full advantage of the work of computing them, a more natural choice is
thus the non-relaxed mini AA%
\begin{equation}
x_{k+1}^{1}=g\left(  \overline{x}_{k}\right)  +\beta_{k}^{\ast}\left(
g\left(  \overline{y}_{k}\right)  -g\left(  \overline{x}_{k}\right)  \right)
. \label{eq:opt_update1}%
\end{equation}
This modified scheme will be labeled AAopt1. Section \ref{sec:proof_AAopt1}
presents a proof that AAopt1 improves convergence compared to AAopt0 for
contractive linear mappings.

Second, the empirical tests shown below suggest that the optimal relaxation
parameters $\beta_{k}^{\ast}$ tend to be correlated between iterations. This
opens up the possibility of the same $\beta_{k}^{\ast}$ as approximation for
$\beta_{k+1}^{\ast},\beta_{k+2}^{\ast},...$, as long as the lower precision
due to the approximation does not deteriorate convergence too much. An AAopt1
which only updates the relaxation every $T$ iterations will be noted
AAopt1\_T, with AAopt1\_1 $\equiv$ AAopt1. Of course, the same modification
could be applied to AAopt0. Details of the algorithm are presented in Section
\ref{sec:AAopt_T_algo}.

\subsection{Costless adaptive relaxation for convergent
mappings\label{sec:Costless}}

A new adaptive relaxation parameter for AA is now proposed. It requires only
two inner products and no extra maps per iteration.

Assume that $x_{k+1}=\overline{x}_{k}+\beta_{k}\left(  \overline{y}%
_{k}-\overline{x}_{k}\right)  $ has been computed for some $\beta_{k}$. Since
the mapping $g$ is assumed to converge to $x^{\ast}$, the next map used in the
AA algorithm, $g\left(  x_{k+1}\right)  $, should provide useful information
on the location of $x^{\ast}$. An improved choice of relaxation parameter in
iteration $k$ would be the $\hat{\beta}_{k}$ that minimizes the distance
between $g\left(  x_{k+1}\right)  $ and $\overline{x}_{k}+\hat{\beta}%
_{k}\left(  \overline{y}-\overline{x}_{k}\right)  $:%
\begin{equation}
\hat{\beta}_{k}=\arg\min_{\beta}\left\Vert \overline{x}_{k}+\beta\left(
\overline{y}_{k}-\overline{x}_{k}\right)  -g\left(  x_{k+1}\right)
\right\Vert _{2}=\frac{\left\langle \overline{y}_{k}-\overline{x}_{k},g\left(
x_{k+1}\right)  -\overline{x}_{k}\right\rangle }{\left\Vert \overline{y}%
_{k}-\overline{x}_{k}\right\Vert ^{2}}. \label{eq:beta_hat}%
\end{equation}

Section \ref{sec:AAnm_proof} includes a demonstration that $\hat{\beta}_{k}$
can improve convergence for a contractive linear operators under an
appropriate norm.

In practice, since $g\left(  x_{k+1}\right)  $ is needed to compute
$\hat{\beta}_{k}$, using it at iteration $k$ to recompute an improved
$x_{k+1}$ implies computing an extra map. However, as stated before, if
$\hat{\beta}_{k}$ is correlated with $\hat{\beta}_{k+1}$, $\hat{\beta}_{k}$ it
can be used as an approximation for $\hat{\beta}_{k+1}$. Computing $\hat
{\beta}_{k}$ would then require only two inner products, a negligible cost
within the AA algorithm.

This scheme, which minimizes the distance between $x_{k+1}$ and its map, will
be labeled AAmd. Its implementation details are provided in Section
\ref{sec:AAnm}. As will be seen in Section \ref{sec:applications}, AAmd
improves convergence speeds for all nonlinear applications presented.

\subsection{Regularizing $\beta\label{sec:Regularizing}$}

Essentially all the AA literature assumes $0<\beta_{k}\leq1$. One exception
could in some way be SqS1-SQUAREM\ \cite{Varadhan2008}, as discussed in
\cite{Tang2023}. It corresponds to a 1-iteration AA where $\alpha$ is also
used as the relaxation parameter. Varadhan and Roland show that this choice
can greatly speed up convergence. The same was pointed out by Raydan and
Svaiter \cite{Raydan2002} for a linear version of the algorithm. Since the
resulting values for $\beta_{k}$ depend on the eigenvalues of the Jacobian of
$g(x)$, these can take values well above one.

A striking result from the numerical simulations presented in Section
\ref{sec:Examples} is that for both suggested schemes, and especially for
AAmd, the calculated relaxation parameters are regularly above one. When this
happens, Chen and Vuik \cite{Chen2024} recommend falling back to a default
relaxation parameter below 1. For AAopt1\_T and AAmd, empirical tests suggest
that relaxation parameters above 1 can provide good results, as long as
excessively high values are avoided. To do so, a practical option is simply to
cap $\beta_{k}$ at some maximum value $\beta_{\max}>1$.\ For all the numerical
tests presented below, $\beta_{\max}=3$ was used.

\subsection{Testing framework}

The performances of AAopt1\_T and AAmd will be tested in three fixed-point
application: a Poisson partial differential equation (PDE) and two
expectation-maximization (EM) algorithm \cite{Dempster1977}. Since AAopt1\_T
and AAmd vary in terms of mappings per iteration, the number of iterations or
maps until convergence are not the most informative criteria for comparison.
For practitioners and software library designers, the important benchmark is
ultimately computation speed. In practice however, comparing computation times
meaningfully is challenging since they can be affected by software
implementation, hardware choices, and programming languages. To get the most
complete picture, iterations, mappings, and computation times will be shown.
To make the speed comparisons as informative as possible, all problems were
programmed as efficiently as possible in a compiled programming language and
executed on the same hardware.

Another matter of concern when comparing multiple AA implementations is the
choice of parameters, especially the number of lags $m$. With a sufficiently
large $m$, AA with constant relaxation can often converge in a similar number
of iterations as AAopt0 or AAopt1. Of course, a larger $m$ entails more work
solving the linear optimization problem and potentially less precision due to
worse conditioning. To properly compare the various versions of AA, each of
them should be implemented with an optimal $m$ for each problem, i.e., the one
which minimizes computation times.

Finally, initial starting points can play a major and unpredictable role in
convergence rates, especially for non-linear applications. To make the
comparisons more robust, numerical experiments with many different starting
points and different synthetic data will be performed.

The applications details, the test implementations and the results are
presented in Section \ref{sec:applications}. Section \ref{sec:Conclusion}
offers concluding remarks.

\section{AAopt1\_T$\label{sec:AAopt1_T}$}

This section demonstrates the convergence improvement of AAopt1 compared to
AAopt0 for a contractive linear operator. This advantage is then illustrated
by solving numerically a linear system of equations. Finally, the full
AAopt1\_T algorithm is laid out in detail.

\subsection{Local improvements from AAopt1 for a linear contraction
mapping\label{sec:proof_AAopt1}}

Consider solving $Ax=b$ where $x\in%
\mathbb{R}
^{n}$, $b\in%
\mathbb{R}
^{n}$ and $A\in%
\mathbb{R}
^{n\times n}$ is symmetric with eigenvalues $0<\lambda_{i}<2$ for all $i$. To
solve this system of equations, define the linear mapping $g(x)=x-\left(
Ax-b\right)  $.

At iteration $k$ of the AA algorithm, assume that $\overline{y}_{k}$ and
$\overline{x}_{k}$ (as in step 5 of Algorithm \ref{algo:AA}) have been computed.

\begin{theorem}%
\label{theorem_AAopt}%
Let $f(x)=g(x)-x=-\left(  Ax-b\right)  $ with $b\in%
\mathbb{R}
^{n}$ and $A\in%
\mathbb{R}
^{n\times n}$ symmetric with eigenvalues $0<\lambda_{i}<2$ for $i=1,...,n$.
For any real vectors $\bar{x}_{k},\bar{y}_{k},$%
\[
\left\Vert f\left(  x_{k+1}^{1}\right)  \right\Vert <\left\Vert f\left(
x_{k+1}^{0}\right)  \right\Vert ,
\]
as long as $f\left(  x_{k+1}^{0}\right)  \neq\mathbf{0}$ (AA has not
converged), where $x_{k+1}^{0}$ and $x_{k+1}^{1}$ are defined in
(\ref{eq:opt_update}) and (\ref{eq:opt_update1}), respectively.
\end{theorem}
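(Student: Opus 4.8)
The plan is to exploit the affine structure of the problem to recognize that, for a linear $g$, the AAopt1 update is nothing but one Picard step applied to the AAopt0 update, i.e.\ $x_{k+1}^{1}=g\!\left(x_{k+1}^{0}\right)$, and then to invoke the fact that $g$ is a contraction in the Euclidean norm. First I would write $g(x)=Mx+b$ with $M:=I-A$, so that $f(x)=g(x)-x=-(Ax-b)$ is affine and $g=\mathrm{id}+f$. Because $f$ is affine, for any scalar $\beta$ and any vectors $\bar{x}_{k},\bar{y}_{k}$ one has $f(\bar{x}_{k})+\beta\bigl(f(\bar{y}_{k})-f(\bar{x}_{k})\bigr)=f\bigl(\bar{x}_{k}+\beta(\bar{y}_{k}-\bar{x}_{k})\bigr)=f\!\left(x_{k+1}^{0}\right)$. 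This identity is exactly what makes $f(x_{k+1}^{0})$ the vector whose norm (\ref{eq:opt_beta}) minimizes, but the argument below uses only the identity itself, not the optimality of $\beta_{k}^{\ast}$.

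Next I would substitute $g(\bar{x}_{k})=\bar{x}_{k}+f(\bar{x}_{k})$ and $g(\bar{y}_{k})=\bar{y}_{k}+f(\bar{y}_{k})$ into (\ref{eq:opt_update1}) and regroup the ``$x$-terms'' and the ``$f$-terms'' separately:
\begin{equation*}
x_{k+1}^{1}=\Bigl[\bar{x}_{k}+\beta_{k}^{\ast}(\bar{y}_{k}-\bar{x}_{k})\Bigr]+\Bigl[f(\bar{x}_{k})+\beta_{k}^{\ast}\bigl(f(\bar{y}_{k})-f(\bar{x}_{k})\bigr)\Bigr]=x_{k+1}^{0}+f\!\left(x_{k+1}^{0}\right)=g\!\left(x_{k+1}^{0}\right).
\end{equation*}
Applying $f$ and using $f(g(x))=g(g(x))-g(x)=M\bigl(g(x)-x\bigr)=Mf(x)$ (immediate since $g$ is affine), we obtain $f\!\left(x_{k+1}^{1}\right)=M\,f\!\left(x_{k+1}^{0}\right)$. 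Finally, since $A$ is symmetric so is $M$, with eigenvalues $1-\lambda_{i}\in(-1,1)$; hence $I-M^{\intercal}M=I-M^{2}$ is positive definite (its eigenvalues are $\lambda_{i}(2-\lambda_{i})>0$), so for $w:=f(x_{k+1}^{0})\neq\mathbf{0}$ we get $\Vert Mw\Vert^{2}=w^{\intercal}M^{2}w<w^{\intercal}w=\Vert w\Vert^{2}$, i.e.\ $\bigl\Vert f\!\left(x_{k+1}^{1}\right)\bigr\Vert<\bigl\Vert f\!\left(x_{k+1}^{0}\right)\bigr\Vert$, which is the claim.

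I do not expect a serious obstacle: the crux is simply spotting the identity $x_{k+1}^{1}=g(x_{k+1}^{0})$, after which everything is one-line linear algebra. That identity is essentially the content of the theorem — in the affine case AAopt1 returns the Picard iterate of the AAopt0 iterate, whose residual is therefore contracted by the factor $\Vert M\Vert_{2}=\rho(M)<1$ that governs convergence of the underlying fixed-point iteration. Two points worth stating explicitly: the inequality in fact holds for \emph{any} relaxation value and for arbitrary $\bar{x}_{k},\bar{y}_{k}$, so no conditioning or well-posedness hypothesis on (\ref{eq:opt_beta}) beyond $x_{k+1}^{0}$ being defined is needed; and symmetry of $A$ enters only at the last step, to upgrade the spectral-radius bound $\rho(M)<1$ to the induced-$2$-norm bound $\Vert M\Vert_{2}<1$ — for nonsymmetric $A$ one would instead work in a norm in which $M$ is a contraction, as anticipated in the discussion of AAmd.
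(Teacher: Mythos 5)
Your proposal is correct and follows essentially the same route as the paper: both establish the key identity $x_{k+1}^{1}=x_{k+1}^{0}+f\left(x_{k+1}^{0}\right)$, deduce $f\left(x_{k+1}^{1}\right)=(I-A)f\left(x_{k+1}^{0}\right)$, and conclude from the spectrum of $(I-A)^{\intercal}(I-A)$ (the paper via a Rayleigh-quotient bound, you via positive definiteness of $I-(I-A)^{2}$, which is the same estimate). Your added observation that $x_{k+1}^{1}=g\!\left(x_{k+1}^{0}\right)$ is a clean way to phrase the identity but does not change the argument.
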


\begin{proof}
It is straightforward to rewrite $x_{k+1}^{1}$ in terms of $x_{k+1}^{0}$:%
\begin{align*}
x_{k+1}^{1}  &  =\left(  1-\beta_{k}^{\ast}\right)  g\left(  \overline{x}%
_{k}\right)  +\beta_{k}^{\ast}g\left(  \overline{y}_{k}\right) \\
x_{k+1}^{1}  &  =\left(  1-\beta_{k}^{\ast}\right)  \left[  \overline{x}%
_{k}-\left(  A\overline{x}_{k}-b\right)  \right]  +\beta_{k}^{\ast}\left[
\overline{y}_{k}-\left(  A\overline{y}_{k}-b\right)  \right] \\
x_{k+1}^{1}  &  =x_{k+1}^{0}-\left(  Ax_{k+1}^{0}-b\right) \\
x_{k+1}^{1}  &  =x_{k+1}^{0}+f\left(  x_{k+1}^{0}\right)  .
\end{align*}

Expressing $f\left(  x_{k+1}^{1}\right)  $ in terms of $f\left(  x_{k+1}%
^{0}\right)  $:%
\begin{align*}
f\left(  x_{k+1}^{1}\right)   &  =-\left(  A\left(  x_{k+1}^{0}+f\left(
x_{k+1}^{0}\right)  \right)  -b\right) \\
f\left(  x_{k+1}^{1}\right)   &  =-\left(  Ax_{k+1}^{0}-b\right)  -Af\left(
x_{k+1}^{0}\right) \\
f\left(  x_{k+1}^{1}\right)   &  =\left(  I-A\right)  f\left(  x_{k+1}%
^{0}\right)  .
\end{align*}
The squared 2-norms of $f\left(  x_{k+1}^{0}\right)  $ and $f\left(
x_{k+1}^{1}\right)  $ are
\[
\left\Vert f\left(  x_{k+1}^{0}\right)  \right\Vert ^{2}=f\left(  x_{k+1}%
^{0}\right)  ^{\intercal}f\left(  x_{k+1}^{0}\right)
\]
and%
\begin{align*}
\left\Vert f\left(  x_{k+1}^{1}\right)  \right\Vert ^{2}  &  =\left(  \left(
I-A\right)  f\left(  x_{k+1}^{0}\right)  \right)  ^{\intercal}\left(
I-A\right)  f\left(  x_{k+1}^{0}\right) \\
&  =f\left(  x_{k+1}^{0}\right)  ^{\intercal}\left(  I-A\right)  ^{\intercal
}\left(  I-A\right)  f\left(  x_{k+1}^{0}\right)  .
\end{align*}

Since $A$ has eigenvalues $0<\lambda_{i}<2$ for $i=1,...,n$, the product
$\left(  I-A\right)  ^{\intercal}\left(  I-A\right)  $ is symmetric with
eigenvalues $0\leq a_{i}<1$ for $i=1,...,n$. The ratio%
\[
\frac{\left\Vert f\left(  x_{k+1}^{1}\right)  \right\Vert ^{2}}{\left\Vert
f\left(  x_{k+1}^{0}\right)  \right\Vert ^{2}}=\frac{f\left(  x_{k+1}%
^{0}\right)  ^{\intercal}\left(  I-A\right)  ^{\intercal}\left(  I-A\right)
f\left(  x_{k+1}^{0}\right)  }{f\left(  x_{k+1}^{0}\right)  ^{\intercal
}f\left(  x_{k+1}^{0}\right)  }%
\]
is a Rayleigh quotient that can only take values in $[0,1)$. Therefore,
$\frac{\left\Vert f\left(  x_{k+1}^{1}\right)  \right\Vert }{\left\Vert
f\left(  x_{k+1}^{0}\right)  \right\Vert }<1$, which completes the proof.
\end{proof}

\subsection{A linear example with AAopt1\label{sec:linear_example_AAopt}}

To apply the proof in a concrete example, consider using the mapping
$g(x)=x-\left(  Ax-b\right)  $ to find the solution to $Ax=b$ where $A=$
diag$(0.1,0.2,...,1.9)$,$\ b=\mathbf{1}\ $and the initial guess is
$x_{0}=\mathbf{0}$. To highlight the difference between AAopt0 and AAopt1, no
bounds on $\beta_{k}^{\ast}$ is imposed on either of them. AA with constant
$\beta=1$ is also shown for comparison. All are implemented with $m=8$.%

\begin{figure}[tbp]%
\begin{minipage}{0.48\textwidth}%
%

\raisebox{-0cm}{\includegraphics[
height=7.0973cm,
width=7.0973cm
]%
{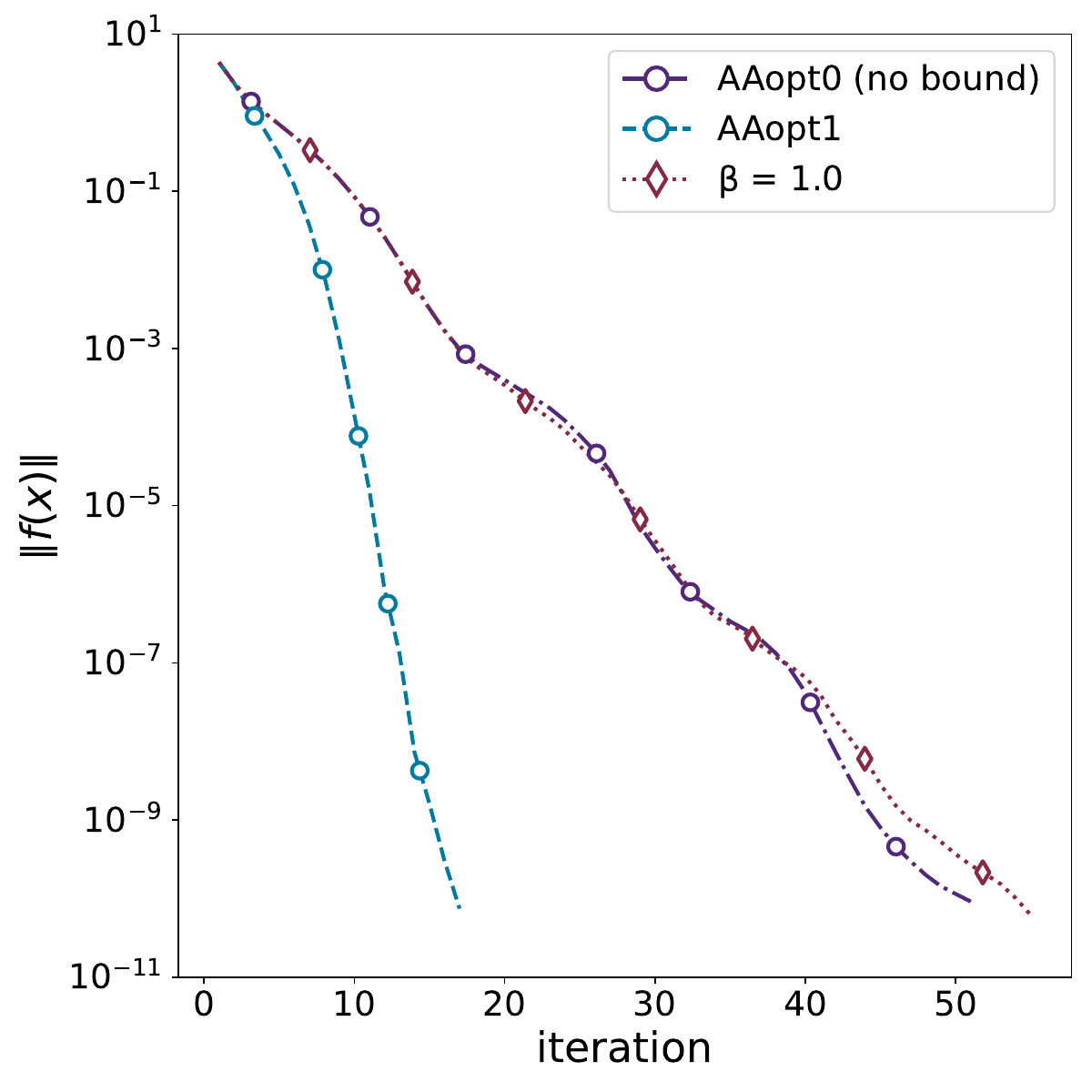}%
}
%

\caption{Residuals (with norm $A^{-1}$), Linear system of equations}%
\label{fig:linear1_AAopt}%
\end{minipage}%
\hfill%
\begin{minipage}{0.48\textwidth}
\raisebox{-0cm}{\includegraphics[
height=7.0973cm,
width=7.0973cm
]%
{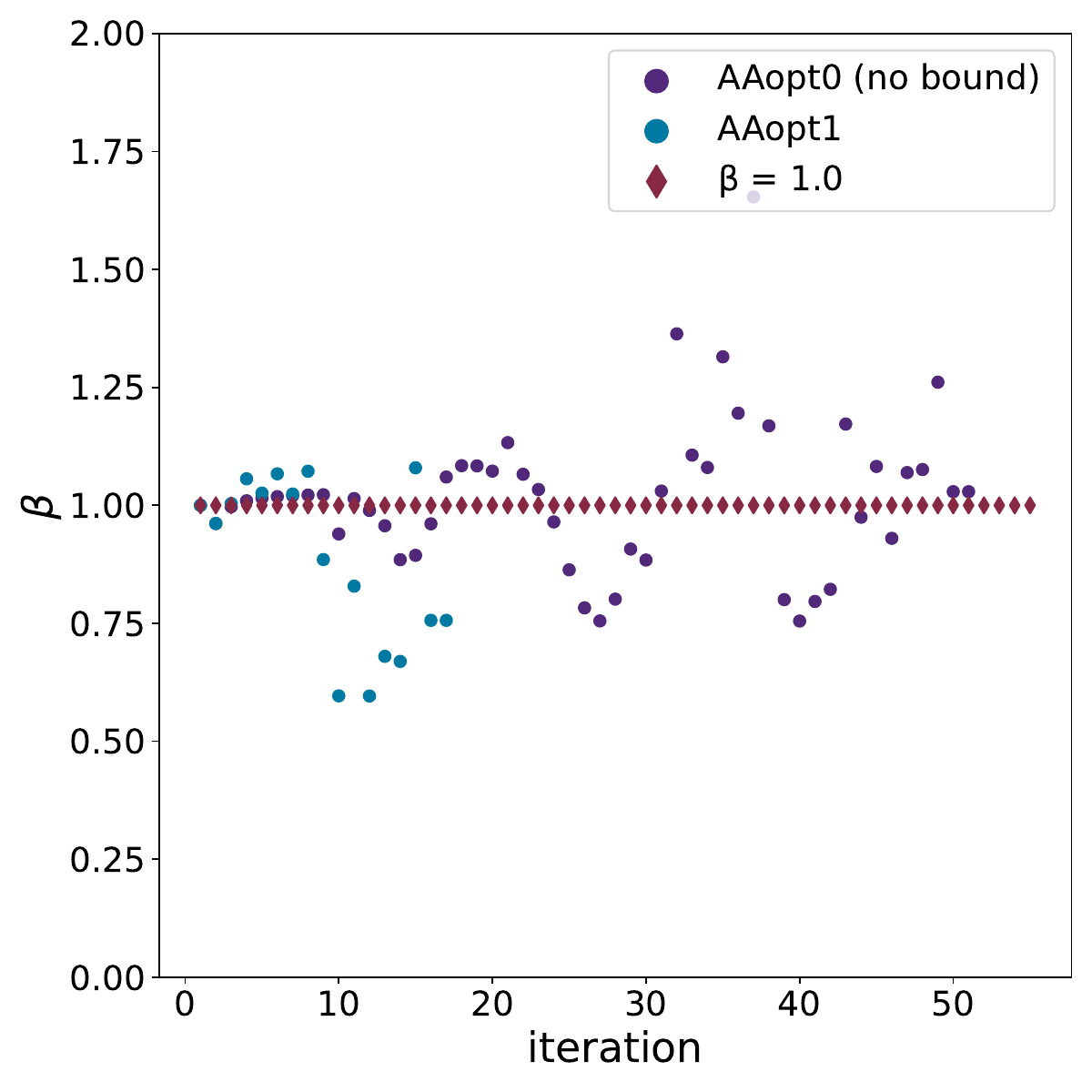}%
}
\caption{Relaxation parameter, linear system of equations}%
\label{fig:linear2_AAopt}%
\end{minipage}%
\end{figure}%

Figure \ref{fig:linear1_AAopt} shows the Euclidean norm of the residual for
each algorithm and Figure \ref{fig:linear2_AAopt} shows the relaxation
parameter at each iteration. AAopt1 converges almost 3 times faster than
AAopt0 and AA with $\beta=1$. This small number of iterations for AAopt1's
looks promising. But since each iteration requires three maps per iteration
instead of one for stationary AA, comparing computation speeds will be a true
test of its promises as an optimization algorithm.

\subsection{The full AAopt1\_T algorithm\label{sec:AAopt_T_algo}}

The full AAopt1\_T algorithm takes as extra parameters $T\geq1$, which
determines at which interval the optimal relaxation $\beta_{k}^{\ast}$ is
recomputed, $\beta_{\max}$, and $\beta_{\text{default}}$ as described in
\ref{sec:Regularizing}.

\begin{algorithm}%
\label{algo:AAopt1_T}%
Input: a mapping $g:%
\mathbb{R}
^{n}\rightarrow%
\mathbb{R}
^{n}$, a starting point $x_{0}\in%
\mathbb{R}
^{n}$, $1\leq m\leq n$, $\beta_{\max}>0,0<\beta_{\text{default}}\leq
\beta_{\max}$ and $T\geq1.$
\end{algorithm}

%

\noindent\makebox[\linewidth]{\rule{\textwidth}{0.4pt}}%
%

\renewcommand{\arraystretch}{1.2}%
%

\noindent
\begin{tabular}
[c]{p{0.6cm}l}%
1 & Set $x_{1}=g(x_{0})$%
\end{tabular}
%

\noindent
\begin{tabular}
[c]{p{0.6cm}l}%
2 & for $k=1,2,...$ until convergence
\end{tabular}
%

\noindent
\begin{tabular}
[c]{p{0.6cm}l}%
3 & \qquad Compute $g\left(  x_{k}\right)  $%
\end{tabular}
%

\noindent
\begin{tabular}
[c]{p{0.6cm}l}%
4 & \qquad Compute $(\alpha_{1}^{(k)},...,\alpha_{m_{k}}^{(k)})$ that solve
\end{tabular}
%

\noindent
\begin{tabular}
[c]{p{0.6cm}l}
& \qquad\quad$\min_{\alpha_{1}^{(k)},...,\alpha_{m_{k}}^{(k)}}\left\Vert
\sum_{i=1}^{m_{k}}\alpha_{i}^{(k)}f\left(  x_{k-m_{k}+i}\right)  \right\Vert $
s.t. $\sum_{i=1}^{m_{k}}\alpha_{i}^{(k)}=1$%
\end{tabular}
%

\noindent
\begin{tabular}
[c]{p{0.6cm}l}%
5 & \qquad Compute $\overline{x}_{k}=\sum_{i=1}^{m_{k}}\alpha_{i}%
^{(k)}x_{k-m_{k}+i}\text{ and }\overline{y}_{k}=\sum_{i=1}^{m_{k}}\alpha
_{i}^{(k)}g\left(  x_{k-m_{k}+i}\right)  $%
\end{tabular}
%

\noindent
\begin{tabular}
[c]{p{0.6cm}l}%
6 & \qquad If $k=1$ or $k\operatorname{mod}T=0$%
\end{tabular}
%

\noindent
\begin{tabular}
[c]{p{0.6cm}l}%
7 & \qquad\qquad Compute $g\left(  \overline{x}_{k}\right)  $ and
$g(\overline{y}_{k})$%
\end{tabular}
%

\noindent
\begin{tabular}
[c]{p{0.6cm}l}%
8 & \qquad\qquad Compute $\beta_{k}^{\ast}=-\frac{\left\langle f\left(
\overline{y}_{k}\right)  -f\left(  \overline{x}_{k}\right)  ,f\left(
\overline{x}_{k}\right)  \right\rangle }{\left\Vert f\left(  \overline{y}%
_{k}\right)  -f\left(  \overline{x}_{k}\right)  \right\Vert ^{2}}$%
\end{tabular}
%

\noindent
\begin{tabular}
[c]{p{0.6cm}l}%
9 & \qquad\qquad If $\beta_{k}^{\ast}\leq0$%
\end{tabular}
%

\noindent
\begin{tabular}
[c]{p{0.6cm}l}%
10 & \qquad\qquad\qquad Set $\beta_{k}=\beta_{\text{default}}$%
\end{tabular}
%

\noindent
\begin{tabular}
[c]{p{0.6cm}l}%
11 & \qquad\qquad else
\end{tabular}
%

\noindent
\begin{tabular}
[c]{p{0.6cm}l}%
12 & \qquad\qquad\qquad Set $\beta_{k}=\min(\beta_{k}^{\ast},\beta_{\max})$%
\end{tabular}
%

\noindent
\begin{tabular}
[c]{p{0.6cm}l}%
13 & \qquad\qquad end if
\end{tabular}
%

\noindent
\begin{tabular}
[c]{p{0.6cm}l}%
14 & \qquad\qquad Set $x_{k+1}=g\left(  \overline{x}_{k}\right)  +\beta
_{k}(g(\overline{y}_{k})-g\left(  \overline{x}_{k}\right)  )$%
\end{tabular}
%

\noindent
\begin{tabular}
[c]{p{0.6cm}l}%
15 & \qquad else
\end{tabular}
%

\noindent
\begin{tabular}
[c]{p{0.6cm}l}%
16 & \qquad\qquad Set $\beta_{k}=\beta_{k-1}$%
\end{tabular}
%

\noindent
\begin{tabular}
[c]{p{0.6cm}l}%
17 & \qquad\qquad Set $x_{k+1}=\overline{x}_{k}+\beta_{k}\left(  \overline
{y}_{k}-\overline{x}_{k}\right)  $%
\end{tabular}
%

\noindent
\begin{tabular}
[c]{p{0.6cm}l}%
18 & \qquad end if
\end{tabular}
%

\noindent
\begin{tabular}
[c]{p{0.6cm}l}%
19 & end for
\end{tabular}
%

\renewcommand{\arraystretch}{1}%
%

\noindent\makebox[\linewidth]{\rule{\textwidth}{0.4pt}}%

Note that possible adjustments to $m_{k}$ such as restarts and composite AA
will be described in Section \ref{sec:Restarts_and_composite_AA}. Also,
$\beta_{\text{default}}=1$, $\beta_{\max}=3$ in all experiments.

\section{AAmd\label{sec:AAnm}}

This section justifies the use of $\hat{\beta}$ as a relaxation parameter and
shows its properties in the same linear example as before. Then, it details
the implementation of the AAmd algorithm.

\subsection{Local improvements from AAmd in a linear contraction mapping\label{sec:AAnm_proof}}

Consider the same mapping as the one for Proof \ref{theorem_AAopt}. Contrary
to AAopt1, AAmd may improve convergence at each iteration in the familiar
Euclidean norm. Nevertheless, we will show that it does in the elliptic norm%
\[
\left\Vert x\right\Vert _{A^{-1}}=\sqrt{x^{\intercal}A^{-1}x}%
\]
induced by the inner product $\left\langle .,.\right\rangle _{A^{-1}}$%

\[
\left\langle x,y\right\rangle _{A^{-1}}=x^{\intercal}A^{-1}y.
\]

Assume that at iteration $k$ of AA, $\overline{y}_{k}$ and $\overline{x}_{k}$
(as in step 5 of Algorithm \ref{algo:AA}) have been computed. To lighten the
notation, define the distance $\bar{d}_{k}\equiv\overline{y}_{k}-\overline
{x}_{k}$. For a given $\beta_{k}$, the next iteration of AA is $x_{k+1}%
=\overline{x}_{k}+\bar{d}_{k}\beta_{k}$ and the map of $x_{k+1}$ is
$g(x_{k+1})$. Using this information, we can compute the improved relaxation
parameter $\hat{\beta}_{k}=\frac{\left\langle \bar{d}_{k},g\left(
x_{k+1}\right)  -\overline{x}_{k}\right\rangle }{\left\Vert \bar{d}%
_{k}\right\Vert ^{2}}$ as defined in (\ref{eq:beta_hat}). Using $\hat{\beta
}_{k}$, we can compute an improved next iterate $\hat{x}_{k+1}=\overline
{x}_{k}+\bar{d}_{k}\hat{\beta}_{k}$. The following theorem proves that
$\hat{\beta}_{k}$ would have been at least as good a choice as $\beta_{k}$ or
better for any $\beta_{k}$, in the sense that $\left\Vert f\left(  \hat
{x}_{k+1}\right)  \right\Vert _{A^{-1}}\leq\left\Vert f\left(  x_{k+1}\right)
\right\Vert _{A^{-1}}$ for all $\beta_{k}$.

\begin{theorem}%
\label{theorem_AAnm}%
Let $f(x)=-\left(  Ax-b\right)  $ with $b\in%
\mathbb{R}
^{n}$ and $A\in%
\mathbb{R}
^{n\times n}$ symmetric with eigenvalues $0<\lambda_{i}<2$ for $i=1,...,n$.
For any $\bar{x}_{k},\bar{d}_{k},\beta_{k},$%
\[
\left\Vert f\left(  \hat{x}_{k+1}\right)  \right\Vert _{A^{-1}}\leq\left\Vert
f\left(  x_{k+1}\right)  \right\Vert _{A^{-1}},
\]
where $x_{k+1}=\overline{x}_{k}+\bar{d}_{k}\beta_{k}$, $\hat{x}_{k+1}%
=\overline{x}_{k}+\bar{d}_{k}\hat{\beta}_{k}$, and $\hat{\beta}_{k}%
=\frac{\left\langle \bar{d}_{k},g\left(  x_{k+1}\right)  -\overline{x}%
_{k}\right\rangle }{\left\Vert \bar{d}_{k}\right\Vert ^{2}}$.
\end{theorem}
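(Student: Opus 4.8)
The plan is to exploit two facts: $f(x)=b-Ax$ is affine, and the elliptic norm $\|\cdot\|_{A^{-1}}$ is exactly the one that cancels the extra factor of $A$ which appears when the affine map acts on a perturbation of $x_{k+1}$. First I would dispose of the degenerate case $\bar d_k=\mathbf{0}$: there $\hat x_{k+1}=x_{k+1}$ and the claimed inequality holds with equality, so from now on assume $\bar d_k\neq\mathbf{0}$, hence $\|\bar d_k\|^2>0$.

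Next I would rewrite everything through the correction $\delta\equiv\hat\beta_k-\beta_k$, so that $\hat x_{k+1}=x_{k+1}+\delta\,\bar d_k$ and, by affineness,
\[
f(\hat x_{k+1})=f(x_{k+1})-\delta\,A\bar d_k .
\]
The one computation that does real work is making $\delta$ explicit. From $g(x_{k+1})=x_{k+1}+f(x_{k+1})$ and $x_{k+1}=\bar x_k+\beta_k\bar d_k$ we get $g(x_{k+1})-\bar x_k=\beta_k\bar d_k+f(x_{k+1})$, so (\ref{eq:beta_hat}) gives
\[
\hat\beta_k=\frac{\left\langle \bar d_k,\,\beta_k\bar d_k+f(x_{k+1})\right\rangle}{\|\bar d_k\|^2}
=\beta_k+\frac{\left\langle \bar d_k,\,f(x_{k+1})\right\rangle}{\|\bar d_k\|^2},
\qquad\text{i.e.}\qquad
\delta=\frac{\left\langle \bar d_k,\,f(x_{k+1})\right\rangle}{\|\bar d_k\|^2}.
\]

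Then I would expand the squared elliptic norm of the new residual. Writing $r\equiv f(x_{k+1})$ and using $A^\intercal=A$, $A A^{-1}=I$,
\[
\|f(\hat x_{k+1})\|_{A^{-1}}^2=(r-\delta A\bar d_k)^\intercal A^{-1}(r-\delta A\bar d_k)
=\|r\|_{A^{-1}}^2-2\delta\,\langle \bar d_k,r\rangle+\delta^2\,\bar d_k^\intercal A\,\bar d_k .
\]
Substituting $\delta=\langle \bar d_k,r\rangle/\|\bar d_k\|^2$ and factoring out the nonnegative scalar $\langle\bar d_k,r\rangle^2/\|\bar d_k\|^4$, the inequality $\|f(\hat x_{k+1})\|_{A^{-1}}^2\le\|f(x_{k+1})\|_{A^{-1}}^2$ collapses to the single scalar condition
\[
\bar d_k^\intercal A\,\bar d_k\le 2\,\bar d_k^\intercal \bar d_k,
\qquad\text{equivalently}\qquad
\bar d_k^\intercal(2I-A)\,\bar d_k\ge 0,
\]
which also trivially holds when $\langle\bar d_k,r\rangle=0$ (then $\delta=0$).

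Finally I would invoke the spectral hypothesis: $A$ symmetric with $0<\lambda_i<2$ makes $2I-A$ symmetric positive definite, so $\bar d_k^\intercal(2I-A)\bar d_k>0$ for $\bar d_k\neq\mathbf{0}$, which completes the proof (equality occurs only in the two degenerate cases $\bar d_k=\mathbf{0}$ or $\langle\bar d_k,f(x_{k+1})\rangle=0$). I do not expect a genuine obstacle; the only step requiring care is the norm expansion, where the choice of the $A^{-1}$ norm is precisely what turns the residual comparison into a condition on the spectrum of $A$ itself rather than on $A^\intercal A$ — which is also the reason AAmd, unlike AAopt1 in Theorem \ref{theorem_AAopt}, need not improve the Euclidean residual at every step. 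An equivalent, more transparent reading: $\hat\beta_k=\beta_k-\phi'(\beta_k)/(2\|\bar d_k\|^2)$ for the convex quadratic $\phi(\beta)=\|f(\bar x_k+\beta\bar d_k)\|_{A^{-1}}^2$ with $\phi''=2\,\bar d_k^\intercal A\bar d_k$, so $\hat\beta_k$ is one gradient-descent step on $\phi$ from $\beta_k$, and such a step is non-increasing exactly when its length $1/(2\|\bar d_k\|^2)$ does not exceed $2/\phi''$, i.e. $\bar d_k^\intercal A\bar d_k\le 2\|\bar d_k\|^2$.
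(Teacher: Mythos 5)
Your proof is correct. The opening reduction is the same as the paper's: you both substitute $\overline{x}_{k}=x_{k+1}-\bar{d}_{k}\beta_{k}$ into (\ref{eq:beta_hat}) to get $\hat{\beta}_{k}=\beta_{k}+\langle\bar{d}_{k},f(x_{k+1})\rangle/\Vert\bar{d}_{k}\Vert^{2}$, so that $\hat{x}_{k+1}$ is $x_{k+1}$ corrected along $\bar{d}_{k}$. From there the two arguments diverge. The paper packages the correction as a projection, writes $f(\hat{x}_{k+1})=(I-AP_{\bar{d}_{k}})f(x_{k+1})$ with $P_{\bar{d}_{k}}=\bar{d}_{k}(\bar{d}_{k}^{\intercal}\bar{d}_{k})^{-1}\bar{d}_{k}^{\intercal}$, and bounds a Rayleigh quotient by the largest eigenvalue of $(I-A^{0.5}P_{\bar{d}_{k}}A^{0.5})^{2}$; you instead expand $\Vert f(x_{k+1})-\delta A\bar{d}_{k}\Vert_{A^{-1}}^{2}$ directly and collapse the comparison to the scalar inequality $\bar{d}_{k}^{\intercal}(2I-A)\bar{d}_{k}\geq 0$. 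Your route is more elementary and arguably tighter: it isolates exactly where the hypothesis $\lambda_{i}<2$ enters (the nonzero eigenvalue of $A^{0.5}P_{\bar{d}_{k}}A^{0.5}$ is precisely $\bar{d}_{k}^{\intercal}A\bar{d}_{k}/\Vert\bar{d}_{k}\Vert^{2}$, which is what your condition controls), it makes the equality cases explicit ($\bar{d}_{k}=\mathbf{0}$ or $f(x_{k+1})\perp\bar{d}_{k}$, the latter also noted informally in the paper), and it sidesteps the paper's somewhat loose eigenvalue bookkeeping for the product $A^{0.5}P_{\bar{d}_{k}}A^{0.5}$. What the paper's formulation buys in exchange is the explicit operator identity $f(\hat{x}_{k+1})=(I-AP_{\bar{d}_{k}})f(x_{k+1})$, which exhibits the step as a (generalized) projection of the residual and parallels the structure of the proof of Theorem \ref{theorem_AAopt}. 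Your closing gradient-descent reading of $\hat{\beta}_{k}$ as a fixed-step descent on the convex quadratic $\phi$ is a nice bonus not present in the paper.
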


\begin{proof}
Substituting $\overline{x}_{k}=x_{k+1}-\bar{d}_{k}\beta_{k}$ in the definition
of $\hat{\beta}_{k}$:%
\begin{align*}
\hat{\beta}_{k}  &  =\frac{\left\langle \bar{d}_{k},g\left(  x_{k+1}\right)
-x_{k+1}+\bar{d}_{k}\beta_{k}\right\rangle }{\left\Vert \bar{d}_{k}\right\Vert
^{2}}\\
\hat{\beta}_{k}  &  =\frac{\left\langle \bar{d}_{k},f\left(  x_{k+1}\right)
\right\rangle }{\left\Vert \bar{d}_{k}\right\Vert ^{2}}+\beta_{k}.
\end{align*}
Note that $\frac{\left\langle \bar{d}_{k},f\left(  x_{k+1}\right)
\right\rangle }{\left\Vert \bar{d}_{k}\right\Vert ^{2}}=0$ -- i.e., $f\left(
x_{k+1}\right)  $ is orthogonal to $\bar{d}_{k}$ -- implies $\hat{\beta}%
_{k}=\beta_{k}$. In other words, $\hat{\beta}_{k}$ is not an improvement over
$\beta_{k}$.

The next iterate $\hat{x}_{k+1}$ is%
\begin{align*}
\hat{x}_{k+1}  &  =\overline{x}_{k}+\bar{d}_{k}\hat{\beta}_{k}\\
\hat{x}_{k+1}  &  =x_{k+1}-\bar{d}_{k}\beta_{k}+\bar{d}_{k}\left(
\frac{\left\langle \bar{d}_{k},f\left(  x_{k+1}\right)  \right\rangle
}{\left\Vert \bar{d}_{k}\right\Vert ^{2}}+\beta_{k}\right) \\
\hat{x}_{k+1}  &  =x_{k+1}+\bar{d}_{k}\frac{\left\langle \bar{d}_{k},f\left(
x_{k+1}\right)  \right\rangle }{\left\Vert \bar{d}_{k}\right\Vert ^{2}}\\
\hat{x}_{k+1}  &  =x_{k+1}-\bar{d}_{k}\left(  \bar{d}_{k}^{\intercal}\bar
{d}_{k}\right)  ^{-1}\bar{d}_{k}^{\intercal}\left(  Ax_{k+1}-b\right)  .\\
\hat{x}_{k+1}  &  =x_{k+1}-P_{\bar{d}_{k}}\left(  Ax_{k+1}-b\right)  .
\end{align*}
where $P_{\bar{d}_{k}}=\bar{d}_{k}\left(  \bar{d}_{k}^{\intercal}\bar{d}%
_{k}\right)  ^{-1}\bar{d}_{k}^{\intercal}$ is a projection matrix. Computing
$f\left(  \hat{x}_{k+1}\right)  $ and expressing it as function of $f\left(
x_{k+1}\right)  $:%
\begin{align*}
f\left(  \hat{x}_{k+1}\right)   &  =-A\left(  x_{k+1}-P_{\bar{d}_{k}}\left(
Ax_{k+1}-b\right)  \right)  +b\\
f\left(  \hat{x}_{k+1}\right)   &  =-\left(  Ax_{k+1}-AP_{\bar{d}_{k}}\left(
Ax_{k+1}-b\right)  -b\right) \\
f\left(  \hat{x}_{k+1}\right)   &  =-\left(  I-AP_{\bar{d}_{k}}\right)
\left(  Ax_{k+1}-b\right) \\
f\left(  \hat{x}_{k+1}\right)   &  =\left(  I-AP_{\bar{d}_{k}}\right)
f\left(  x_{k+1}\right)  .
\end{align*}

If $f\left(  x_{k+1}\right)  =\mathbf{0}$ (the AA algorithm has converged),
$f\left(  \hat{x}_{k+1}\right)  =\mathbf{0}$ and the theorem trivially
verified. If not, we may compute $\left\Vert f\left(  \hat{x}_{k+1}\right)
\right\Vert _{A^{-1}}^{2}$ and $\left\Vert f\left(  x_{k+1}\right)
\right\Vert _{A^{-1}}^{2}$:%
\[
\left\Vert f\left(  x_{k+1}\right)  \right\Vert _{A^{-1}}^{2}=f\left(
x_{k+1}\right)  ^{\intercal}A^{-1}f\left(  x_{k+1}\right)  =\left(
A^{-0.5}f\left(  x_{k+1}\right)  \right)  ^{\intercal}\left(  A^{-0.5}f\left(
x_{k+1}\right)  \right)  .
\]
Similarly,%
\begin{align*}
\left\Vert f\left(  \hat{x}_{k+1}\right)  \right\Vert _{A^{-1}}^{2}  &
=f\left(  x_{k+1}\right)  ^{\intercal}\left(  I-AP_{\bar{d}_{k}}\right)
^{\intercal}A^{-1}\left(  I-AP_{\bar{d}_{k}}\right)  f\left(  x_{k+1}\right)
\\
\left\Vert f\left(  \hat{x}_{k+1}\right)  \right\Vert _{A^{-1}}^{2}  &
=f\left(  x_{k+1}\right)  ^{\intercal}\left(  A^{-0.5}-A^{0.5}P_{\bar{d}_{k}%
}\right)  ^{\intercal}\left(  A^{-0.5}-A^{0.5}P_{\bar{d}_{k}}\right)  f\left(
x_{k+1}\right) \\
\left\Vert f\left(  \hat{x}_{k+1}\right)  \right\Vert _{A^{-1}}^{2}  &
=\left(  A^{-0.5}f\left(  x_{k+1}\right)  \right)  ^{\intercal}\left(
I-A^{0.5}P_{\bar{d}_{k}}A^{0.5}\right)  ^{2}A^{-0.5}f\left(  x_{k+1}\right)  .
\end{align*}
By the properties of projection operators and matrix multiplications, the
minimum eigenvalue of $P$ is zero. Label $\lambda_{\min}$ and $\lambda_{\max}$
are the smallest and largest eigenvalue of $A$, respectively. Since $A$ has no
negative eigenvalues, the minimum eigenvalue of $A^{0.5}P_{\bar{d}_{k}}%
A^{0.5}$ for any $\bar{d}_{k}$ is $\sqrt{\lambda_{\min}}\ast0\ast\sqrt
{\lambda_{\min}}=0$ and its maximum eigenvalue is $\sqrt{\lambda_{\max}}%
\ast1\ast\sqrt{\lambda_{\max}}=\lambda_{\max}<2$. Therefore, the maximum
eigenvalue of $\left(  I-A^{0.5}P_{\bar{d}_{k}}A^{0.5}\right)  ^{2}$ must be
$\max\{\left(  1-0\right)  ^{2},\left(  1-\lambda_{\max}\right)  ^{2}\}=1$.

The ratio
\[
\frac{\left\Vert f\left(  \hat{x}_{k+1}\right)  \right\Vert _{A^{-1}}^{2}%
}{\left\Vert f\left(  x_{k+1}\right)  \right\Vert _{A^{-1}}^{2}}=\frac{\left(
A^{-0.5}f\left(  x_{k+1}\right)  \right)  ^{\intercal}\left(  I-A^{0.5}%
P_{\bar{d}_{k}}A^{0.5}\right)  ^{2}A^{-0.5}f\left(  x_{k+1}\right)  }{\left(
A^{-0.5}f\left(  x_{k+1}\right)  \right)  ^{\intercal}\left(  A^{-0.5}f\left(
x_{k+1}\right)  \right)  }%
\]
is a Rayleigh quotient. By the property of Rayleigh quotients, its maximum
value is the maximum eigenvalue of $\left(  I-A^{0.5}P_{\bar{d}_{k}}%
A^{0.5}\right)  ^{2}$. Therefore $\frac{\left\Vert f\left(  \hat{x}%
_{k+1}\right)  \right\Vert _{A^{-1}}}{\left\Vert f\left(  x_{k+1}\right)
\right\Vert _{A^{-1}}}\leq1$, which completes the proof.
\end{proof}

As mentioned in introduction, with $g\left(  x_{k+1}\right)  $ already
computed, using $\hat{\beta}_{k}$ at step $k$ instead of $\beta_{k}$ involves
computing $g\left(  \hat{x}_{k+1}\right)  $, a second map in the same
iteration $k$. However, if the optimal relaxation parameters tend to be
correlated, using $\hat{\beta}_{k}$ in step $k+1$ only adds two inner
products, a negligible computation cost within the AA algorithm.

\subsection{A linear example with AAmd\label{sec:linear_example_AAnm}}

Consider the same example as in Section \ref{sec:linear_example_AAopt}. Four
different AA implementations will be compared to study the impact of the
initial choice of $\beta_{k}$ on $\hat{\beta}_{k}$ and the impact of using
$\hat{\beta}_{k-1}$ as approximation for $\hat{\beta}_{k}$. The first
implementation is a stationary relaxation parameter $\beta=1$ for reference.
The second is AA where, at each iteration, a default relaxation parameter
$\beta=1$ is used to compute $x_{k+1}$ and $g(x_{k+1})$ and $x_{k+1}$ and
$g(x_{k+1})$ are used to compute $\hat{\beta}_{k}$ to obtain $\hat{x}_{k+1}$.
This second specification will be labeled \textquotedblleft AAmd, $\hat{\beta
}_{k}$ (from 1)\textquotedblright. The third implementation is AA where, at
each iteration, the previous relaxation parameter $\hat{\beta}_{k-1}$ is used
to compute $x_{k+1}$ and $g(x_{k+1})$, which are used to recompute $\hat
{\beta}_{k}$ to obtain $\hat{x}_{k+1}$. This third specification will be
labeled \textquotedblleft AAmd, $\hat{\beta}_{k}$ (from $\hat{\beta}_{k-1}%
$)\textquotedblright. The fourth specification is AA where, at each iteration,
$\hat{x}_{k}$ and $g(\hat{x}_{k})$ are used to compute $\hat{\beta}_{k-1}$,
and $\hat{\beta}_{k-1}$ is used directly at iteration $k$ to compute $\hat
{x}_{k+1}$. It will be labeled \textquotedblleft AAmd, $\hat{\beta}_{k-1}%
$\textquotedblright. Again, $m=8$ is used in all algorithms.%

\begin{figure}[tbp]%
\begin{minipage}{0.48\textwidth}%
%

\raisebox{-0cm}{\includegraphics[
height=7.0973cm,
width=7.0973cm
]%
{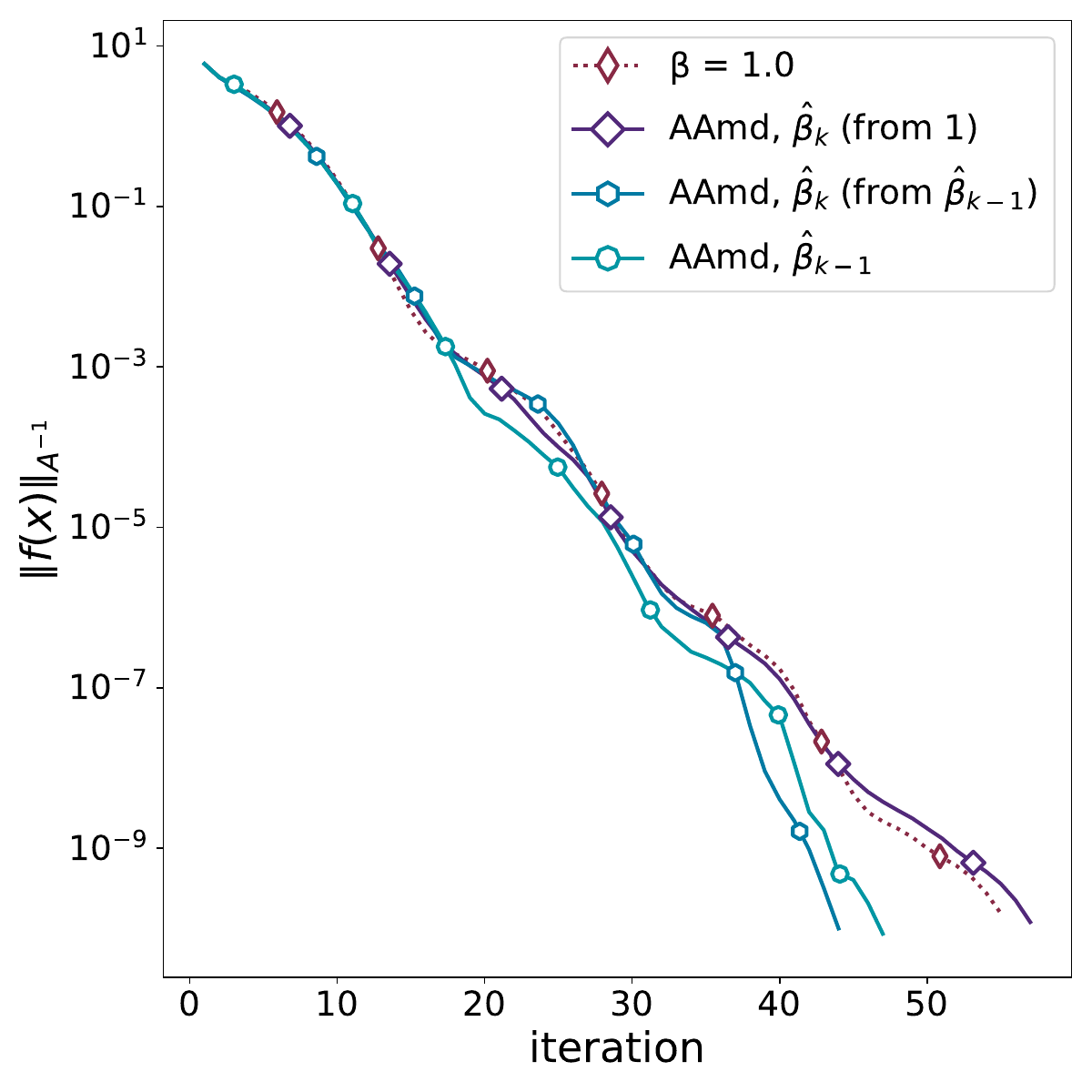}%
}
%

\caption{Residuals (with norm $A^{-1}$), Linear system of equations}%
\label{fig:linear1_AAnm}%
\end{minipage}%
\hfill%
\begin{minipage}{0.48\textwidth}%
\raisebox{-0cm}{\includegraphics[
height=7.0973cm,
width=7.0973cm
]%
{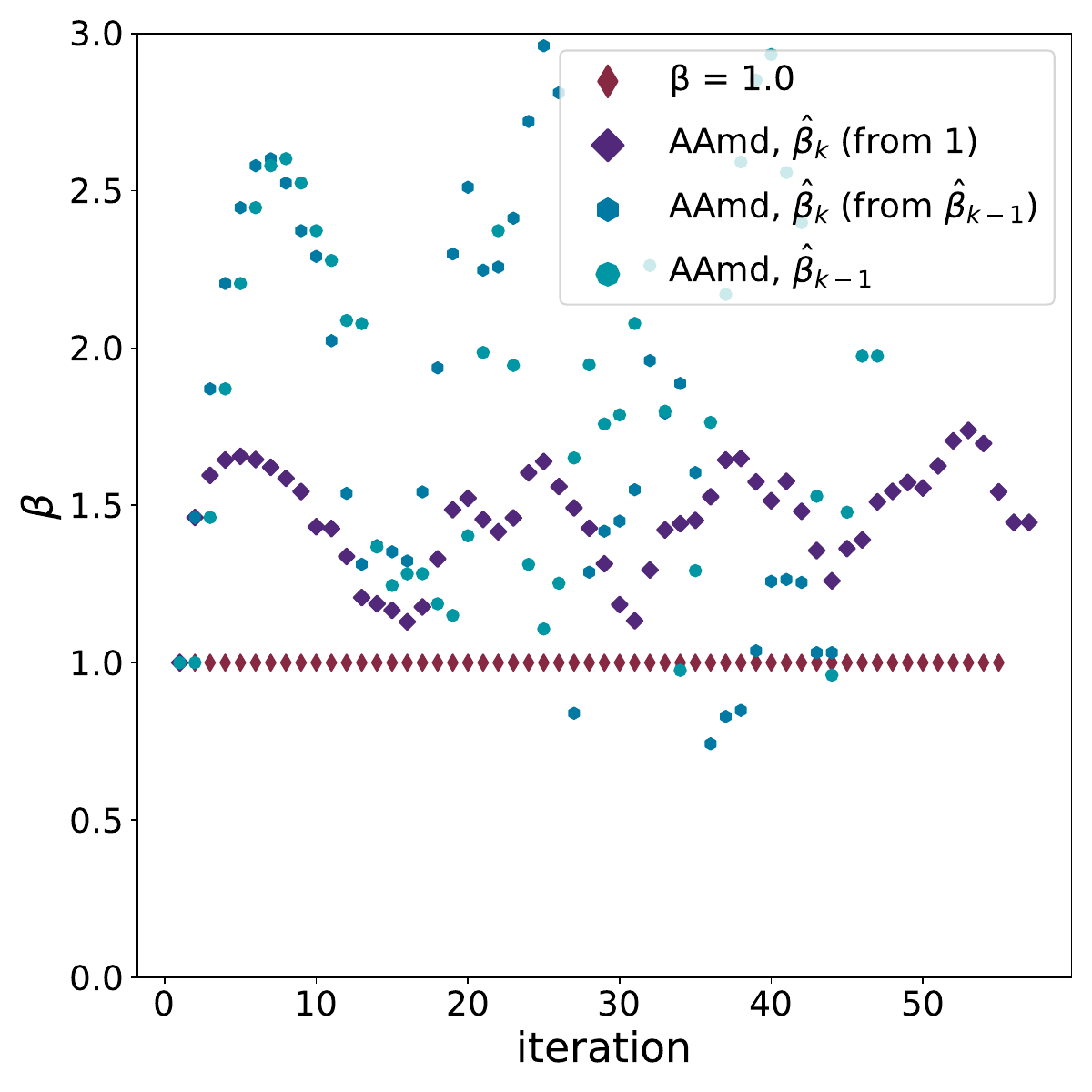}%
}
\caption{Relaxation parameter, linear system of equations}%
\label{fig:linear2_AAnm}%
\end{minipage}%
\end{figure}%

Figure \ref{fig:linear1_AAnm} shows the $A^{-1}$-norm of the residual for each
algorithm. AAmd, $\hat{\beta}_{k}$ (from $\hat{\beta}_{k-1}$) and AAmd,
$\hat{\beta}_{k-1}$ show very modest convergence improvements. Note that with
$m\geq19$, all algorithms would essentially converge in the same number of iterations.

More interestingly, Figure \ref{fig:linear2_AAnm} shows the relaxation
parameter for each AA implementations. Interestingly, most $\hat{\beta}_{k}$
are above 1, and very often above 2. As argued before, they are also obviously
correlated from one iteration to the next. Hence, it makes sense to avoid
computing an extra map by using past information to compute $\hat{\beta}$.
Finally, the default relaxation parameter used to calculate $x_{k+1}$ matters,
as attested by the relatively poor performance of AAmd, $\hat{\beta}_{k}$
(from $1$) compared to AAmd, $\hat{\beta}_{k}$ (from $\hat{\beta}_{k-1}$) and
AAmd, $\hat{\beta}_{k-1}$.

\subsection{Extra regularization for AAmd}

From now on, we only consider AAmd, $\hat{\beta}_{k-1}$ -- where $\hat{\beta
}_{k-1}$ is used as an approximation for $\hat{\beta}_{k}$ to compute
$x_{k+1}$. There is no guarantee that $\hat{\beta}_{k}\approx\hat{\beta}%
_{k-1}$, but a heuristic way of detecting whether it may be the case is by
verifying how much $\hat{\beta}_{k}$ varies between iterations. If it varies
too much, a safer strategy is to fall back on the default relaxation parameter.

Another concern is the fact that a high $\hat{\beta}_{k}$ can lead to an even
higher $\hat{\beta}_{k+1}$, $\hat{\beta}_{k+2}$, etc. In many numerical
experiments shown in Section \ref{sec:Examples}, $\hat{\beta}_{1}$ is set to a
default relaxation parameter (always 1), but $\hat{\beta}_{2},\hat{\beta}%
_{3},...$ sometimes diverge far away from the unit interval, causing worse
convergence than with the default $\beta=1$. A very effective solution to this
problem is to reset $\beta_{k}$ to $1$ (or some other default value between 0
and 1) for one iteration if $\hat{\beta}$ has been above 1 for too many
consecutive iterations.

\subsection{The full AAmd algorithm}

The entire AAmd algorithm is as follows. It takes as input two additional
parameters: $\delta$, the discrepancy allowed between $\hat{\beta}_{k}$ and
$\hat{\beta}_{k-1}$ and $P$, the maximum number of consecutive iterations
where $\hat{\beta}_{k}$ is allowed to be greater than 1 before $\hat{\beta
}_{k}$ is reset to $\beta_{\text{default}}$.

\newpage

\begin{algorithm}%
\label{algo:AAnm}%
Input: a mapping $g:%
\mathbb{R}
^{n}\rightarrow%
\mathbb{R}
^{n}$, a starting point $x_{0}\in%
\mathbb{R}
^{n}$, $1\leq m\leq n$, $P\geq0$,$\delta>0,\beta_{\max}>0\ \ $and
$0<\beta_{\text{default}}\leq\beta_{\max}$.
\end{algorithm}

%

\noindent\makebox[\linewidth]{\rule{\textwidth}{0.4pt}}%
%

\renewcommand{\arraystretch}{1.2}%
%

\noindent
\begin{tabular}
[c]{p{0.6cm}l}%
1 & Set $x_{1}=g(x_{0})$%
\end{tabular}
%

\noindent
\begin{tabular}
[c]{p{0.6cm}l}%
2 & Set $n_{>1}=0$%
\end{tabular}
%

\noindent
\begin{tabular}
[c]{p{0.6cm}l}%
3 & for $k=1,2,...$ until convergence
\end{tabular}
%

\noindent
\begin{tabular}
[c]{p{0.6cm}l}%
4 & \qquad Compute $g\left(  x_{k}\right)  $%
\end{tabular}
%

\noindent
\begin{tabular}
[c]{p{0.6cm}l}%
5 & \qquad Compute $(\alpha_{1}^{(k)},...,\alpha_{m_{k}}^{(k)})$ that solve
\end{tabular}
%

\noindent
\begin{tabular}
[c]{p{0.6cm}l}
& \qquad\quad$\min_{\alpha_{1}^{(k)},...,\alpha_{m_{k}}^{(k)}}\left\Vert
\sum_{i=1}^{m_{k}}\alpha_{i}^{(k)}f\left(  x_{k-m_{k}+i}\right)  \right\Vert $
s.t. $\sum_{i=1}^{m_{k}}\alpha_{i}^{(k)}=1$%
\end{tabular}
%

\noindent
\begin{tabular}
[c]{p{0.6cm}l}%
6 & \qquad Compute $\overline{x}_{k}=\sum_{i=1}^{m_{k}}\alpha_{i}%
^{(k)}x_{k-m_{k}+i}\text{ and }\overline{y}_{k}=\sum_{i=1}^{m_{k}}\alpha
_{i}^{(k)}g\left(  x_{k-m_{k}+i}\right)  $%
\end{tabular}
%

\noindent
\begin{tabular}
[c]{p{0.6cm}l}%
7 & \qquad If $k\geq2$: Compute $\hat{\beta}_{k-1}=\frac{\left\langle
\overline{y}_{k-1}-\overline{x}_{k-1},g\left(  x_{k}\right)  -\overline
{x}_{k-1}\right\rangle }{\left\Vert \overline{y}_{k-1}-\overline{x}%
_{k-1}\right\Vert ^{2}}$%
\end{tabular}
%

\noindent
\begin{tabular}
[c]{p{0.6cm}l}%
8 & \qquad If $k\geq3$ and $|\hat{\beta}_{k-1}-\hat{\beta}_{k-2}|<\delta\ $and
$n_{>1}\leq P$ and $\hat{\beta}_{k-1}>0$%
\end{tabular}
%

\noindent
\begin{tabular}
[c]{p{0.6cm}l}%
9 & \qquad\qquad Set $\beta_{k}=\min(\hat{\beta}_{k-1},\beta_{\max})$%
\end{tabular}
%

\noindent
\begin{tabular}
[c]{p{0.6cm}l}%
10 & \qquad else
\end{tabular}
%

\noindent
\begin{tabular}
[c]{p{0.6cm}l}%
11 & \qquad\qquad Set $\beta_{k}=\beta_{\text{default}}$%
\end{tabular}
%

\noindent
\begin{tabular}
[c]{p{0.6cm}l}%
12 & \qquad end if
\end{tabular}
%

\noindent
\begin{tabular}
[c]{p{0.6cm}l}%
13 & \qquad If $\beta_{k}>1$%
\end{tabular}
%

\noindent
\begin{tabular}
[c]{p{0.6cm}l}%
14 & \qquad\qquad Set $n_{>1}=n_{>1}+1$\\
\multicolumn{1}{l}{15} & \qquad else\\
\multicolumn{1}{l}{16} & \qquad\qquad Set $n_{>1}=0$\\
\multicolumn{1}{l}{17} & \qquad end if\\
\multicolumn{1}{l}{18} & \qquad Set $x_{k+1}=\overline{x}_{k}+\beta
_{k}(\overline{y}_{k}-\overline{x}_{k})$\\
\multicolumn{1}{l}{19} & end for
\end{tabular}
%

\renewcommand{\arraystretch}{1}%
%

\noindent\makebox[\linewidth]{\rule{\textwidth}{0.4pt}}%

The parameters chosen for the experiments below are $\beta_{\text{default}}%
=1$, $\beta_{\max}=3$, $\delta=2,P=10$.

\section{Implementation details for Anderson acceleration}

\subsection{Solving the linear system}

In addition to computing $g$, AA can spend a substantial amount of time
solving the linear system. The problem is customarily formulated as an
unconstrained optimization%
\[
\min_{\gamma_{k}}\left\Vert f\left(  x_{k}\right)  -\mathcal{F}_{k}\gamma
_{k}\right\Vert ,
\]
where $\mathcal{F}_{k}\in%
\mathbb{R}
^{n\times m_{k}}=\left[  f\left(  x_{k}\right)  -f\left(  x_{k-1}\right)
,\cdots,f\left(  x_{k-m_{k}}\right)  -f\left(  x_{k-m_{k}-1}\right)  \right]
$, and $\gamma_{k}\in%
\mathbb{R}
^{m_{k}}$. As suggested in \cite{Walker2011}, it can be solved more quickly by
QR decomposition. New columns can be added from the right of the Q and R
matrices at each iteration and efficiently dropped from the left using Givens
rotation. In the AA implementation used in the numerical section, the QR
decomposition is recomputed anew after 10 rotations to limit the accumulation
of numerical inaccuracies.

A central concern of all Anderson-type acceleration methods is the
conditioning of the linear system. As the algorithm converges, new columns can
be orders of magnitude smaller than old ones and can sometimes be close to
linearly dependent. As in \cite{Walker2011}, this will be addressed by
dropping left-most columns of $\mathcal{F}_{k}$ until the R matrix has a
reasonable condition number. In the PDE estimation, the upper limit for the
condition number was set to $10^{12}$ while for the EM algorithm application,
it was $10^{5}$.

Other methods for addressing ill-conditioning and adjusting $m_{k}$ have been
suggested in \cite{Fang2009}, \cite{Pollock2023} and \cite{Chen2024}.

\subsection{Restarts and composite AA\label{sec:Restarts_and_composite_AA}}

To limit ill-conditioning and the size of the linear system to be solved, Fang
and Saad \cite{Fang2009} suggested restarting the algorithm from the last
iterate and ignoring past directions. Pratapa and Suryanarayana
\cite{Pratapa2015} and Henderson and Varadhan \cite{Henderson2019} made
similar points in the context of Pulay mixing and AA.

A cousin of this idea is the composite AA, explored by Chen and Vuik in 2022
\cite{Chen2022}. After one AA iteration, instead of using the iterate $x_{k}$
directly to compute the map $g(x_{k})$, they propose using $x_{k}$ as the
starting point for a second AA (lasting only one or two iterations), and
feeding the result of this second (inner-loop) AA back in the original
(outer-loop) AA.

Both ideas were tried on all problems. Whilst periodic restarts did not
reliably improve performances, composite AAmd with a one-iteration AA for the
inner loop showed very good results for the PDE problem. Hence, composite AA
with a one-iteration AA with $\beta=1$ in the inner loop will be included in
the set of specifications to test.

\section{Applications\label{sec:applications}}

A Poisson PDE applications and two EM algorithm applications were used as
benchmarks. They are sufficiently challenging to estimate and require enough
iterations to create visible differences in performance between different AA
implementations. They also offer a good variety of number of parameters and
mapping computation time.

The EM algorithm is commonly used to fit statistical models with missing data
or latent variables to estimate the parameters of underlying unobserved
distributions. It consists of two steps. An expectation step takes the model
parameters as given and updates the parameters of the unobserved data via
Bayes' rule. Then, the likelihood of the observed data is maximized, taking
the unobserved distributions as given. The EM algorithm is usually very stable
and always converges, although it can sometimes be to a saddle point instead
of a maximum. It is also notoriously slow, making it a prime candidate for
acceleration. Both EM application were adapted from the R code used in
\cite{Henderson2019}.

\subsection{The Bratu problem\label{sec:Bratu}}

The standard Liouville-Bratu-Gelfand equation is a nonlinear version of the
Poisson equation, described as%

\[
\Delta u+\lambda e^{u}=0,
\]
where $u$ is a function $\left(  x,y\right)  \in\mathcal{D}=[0,1]^{2}$ and
$\lambda$ is a constant physical parameter. It is a popular application for
benchmarking new fixed-point acceleration methods (see \cite{Walker2011} and
\cite{Fang2009}, for example). Dirichlet boundary conditions are applied such
that $u(x,y)=0$ on the boundary of $\mathcal{D}$. It is solved using the
inverse of the discrete Laplace operator as preconditioner as in
\cite{Chen2024}. The mapping is%

\[
x_{i}^{(k+1)}=x_{i}^{(k)}+(b_{i}-A_{i}x+\lambda e^{x_{i}^{(k)}})/A_{i,i}\quad
i=1,...,50^{2},
\]

where $A$ is the Laplace operator, $A_{i,i}$ is its row $i$ and column $i$'s
entry, and $A_{i}$ is the entire row $i$. In the experiments, $\lambda$ was
set to $6$ and a centered-difference discretization on a $50\times50$ grid was used.

\subsection{The EM algorithm for a proportional hazard model with interval
censoring}

Proportional hazard models are commonly used in medical and social studies,
and censored data is a frequent occurrence which complicates their estimation.
Wang et al. \cite{Wang2016} proposed using the EM algorithm to estimate a
semiparametric proportional hazard model with interval censoring. Their
estimation is a two-stage data augmentation with latent Poisson random
variables and a monotone spline to represent the baseline hazard function. The
algorithm is light and simple to implement (see \cite{Wang2016} for details),
yet may benefit from acceleration.

The likelihood of an individual observation is
\[
L(\delta_{1},\delta_{2},\delta_{3},\mathbf{x})=F(R|\mathbf{x})^{\delta_{1}%
}\{F(R|\mathbf{x})-F(L|\mathbf{x})\}^{\delta_{2}}\{1-F(L|\mathbf{x}%
)\}^{\delta_{3}},
\]
where $\delta_{1}$, $\delta_{2}$, and $\delta_{3}$ represent right-,
interval-, or left-censoring, respectively.

Synthetic test data is produced as follows. The baseline hazard function is
modeled as a six-parameter I-spline and generated as $\Lambda_{0}%
(t)=\log(1+t)+t^{1/2}$. Failure times $T$ are generated from a distribution
$F(t,\mathbf{x})=1-\exp\{-\Lambda_{0}(t)\exp(\mathbf{x}^{\intercal}\beta)\}$.
The covariates $\mathbf{x}$ are $x_{1},x_{2}\sim N(0,0.5^{2})$ and
$x_{3},x_{4}\sim$Bernoulli$(0.5)$, for a total of 10 parameters to estimate.
For each subject, censoring is simulated by generating a $Y\sim$%
Exponential$(1)$ distribution and setting $(L,R)=(Y,\infty)$ if $Y\leq T$ or
$(L,R)=(0,Y)$ if $Y>T$. The sample size was 2000 individuals.

During the estimations, monitoring the value of the likelihood was not
necessary for convergence.

\subsection{EM algorithm for admixed populations}

When associating health outcomes with specific genes, a recurring confounding
factor is population stratification, the clustering of genes within population
subgroups. In \cite{Alexander2009}, Alexander, Novembre and Lange put forward
a new algorithm called ADMIXTURE to identify latent subpopulations from
genomics data using the EM algorithm.

To test the algorithm, datasets are simulated as follows. Individual
$i\in1,...,n$ are assumed to be part of $K$ distinct ancestral groups in
various proportions. Each has a pair of alleles ($a_{i,j}^{1},a_{i,j}^{2}$) at
the marker $j\in1,...,J$ with major or minor frequencies recorded in a
variable $X$. For individual $i$ and marker $j$, $X_{i,j}=0$ if both minor
alleles are minor, $X_{i,j}=1$ if one is minor and one is major, and
$X_{i,j}=2$ if both are major. The probability of observing $X_{i,j}$ is
determined by ancestry-specific parameters $f_{k,j}$, the frequency of minor
alleles at the marker $j$ in the ancestral population $k\in1...K$, and the
parameter $q_{i,k}$ representing the unobserved proportion of ancestry of
individual $i$ from group $k$. The log-likelihood function is%
\begin{equation}
L\left(  \mathbf{F},\mathbf{Q}\right)  =\sum_{i=1}^{n}\sum_{j=1}^{J}\left\{
X_{i,j}\log\left(  \sum_{k=1}^{K}q_{i,k}f_{k,j}\right)  +\left(
2-X_{i,j}\right)  \log\left(  \sum_{k=1}^{K}q_{i,k}\left(  1-f_{k,j}\right)
\right)  \right\}  , \label{ancestry_likelihood}%
\end{equation}

where $\mathbf{F}^{K\times J}$ and $\mathbf{Q}^{n\times K}$ are matrices with
entries $f_{k,j}$ and $q_{i,k}$,respectively. To restrict probabilities to the
unit interval during the estimation, they are modeled as transformations from
unbounded parameters $u_{k,j},v_{i,k}$: $f_{k,j}=1/(1+e^{-u_{k,j}})$ and
$q_{i,k}=e^{v_{i,k}}/\sum_{k}e^{v_{i,k}}$. New random values for $X$ and new
starting values are generated for each draw with parameters $K=3$, $J=100$,
and $n=150$, for a total of $3\times(100+150)=750$ parameters to estimate.

For AA to converge, it was necessary to monitor the likelihood value
(\ref{ancestry_likelihood}) and fall back to the last EM iteration in case an
AA iteration lead to a worse likelihood value. Also, since convergence was
slow, the stopping criterion was set to $\left\Vert f(x)\right\Vert
\leq10^{-4}$.

\subsection{Example Bratu problem\label{sec:Examples}}

\subsubsection{Non-composite AA\label{sec:Bratu_non_composite}}

This section compares AAopt1 and AAmd to AAopt0 and AA with stationary
relaxation parameters of $\beta=1$ and $\beta=0.5$ for the Bratu problem with
a $x_{0}=\mathbf{0}$ starting point. Hereafter, AAmd refers to Algorithm
\ref{algo:AAnm} with $\hat{\beta}$ capped at $\beta_{\max}$ and resets to
$\hat{\beta}=1$. To show the impact of these regularizations, the examples
also show the performance of AAmd without bounds on $\hat{\beta}$ or resets,
labeled \textquotedblleft AAmd (no reg.)\textquotedblright. All algorithms
used a maximum of $m=16$ lags. AAopt0 was implemented as in \cite{Chen2024},
with $\beta_{k}$ set to $0.5$ whenever the optimal relaxation parameter
$\beta_{k}^{\ast}$ was zero or fell outside the unit interval.%

\begin{figure}[tbp]%
\begin{minipage}{0.48\textwidth}%
%

\raisebox{-0cm}{\includegraphics[
height=7.0973cm,
width=7.0973cm
]%
{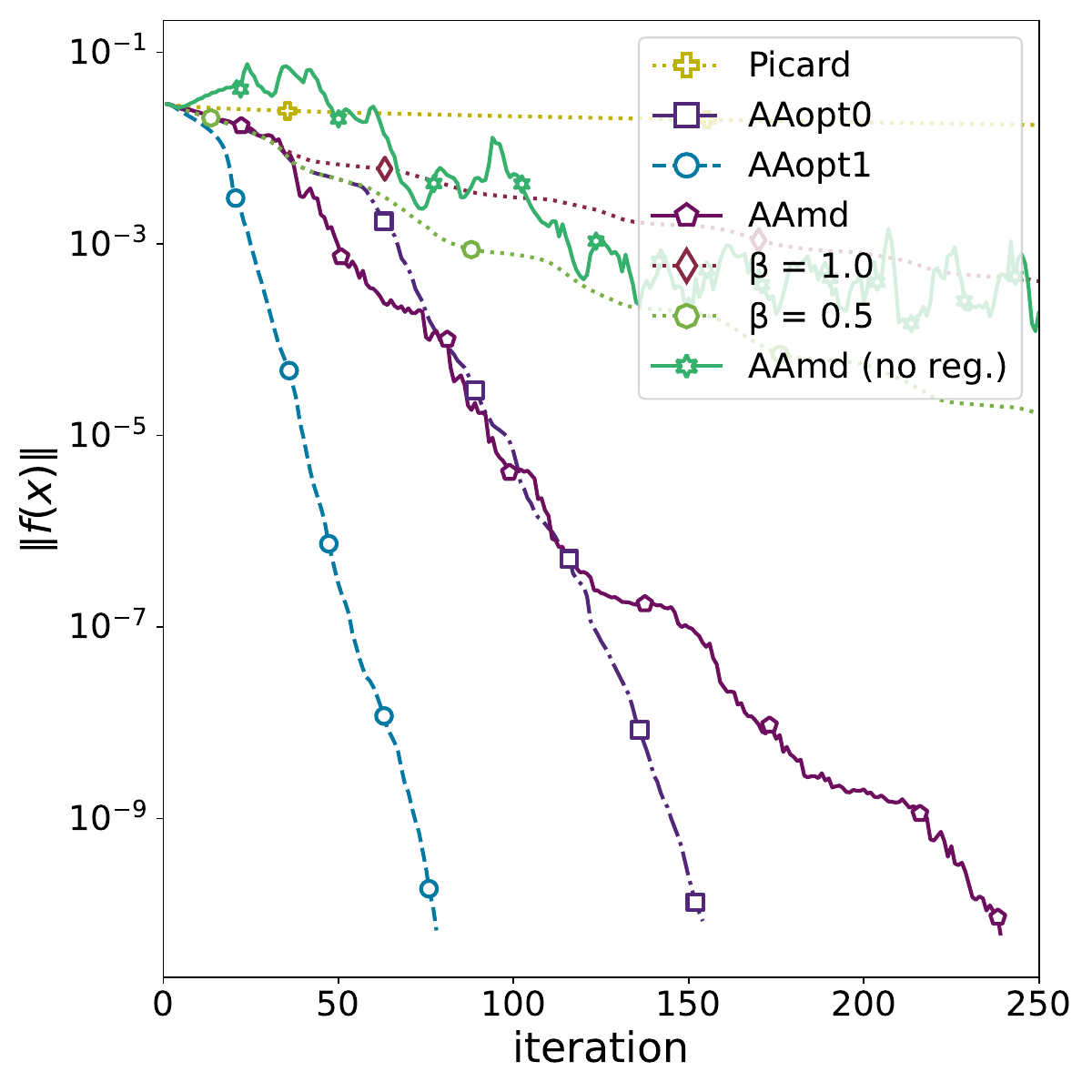}%
}
\caption{Residual norm, Bratu problem, m=16}%
\label{fig:bratu1_resid}%
\end{minipage}%
\hfill%
\begin{minipage}{0.48\textwidth}%
\raisebox{-0cm}{\includegraphics[
height=7.0973cm,
width=7.0973cm
]%
{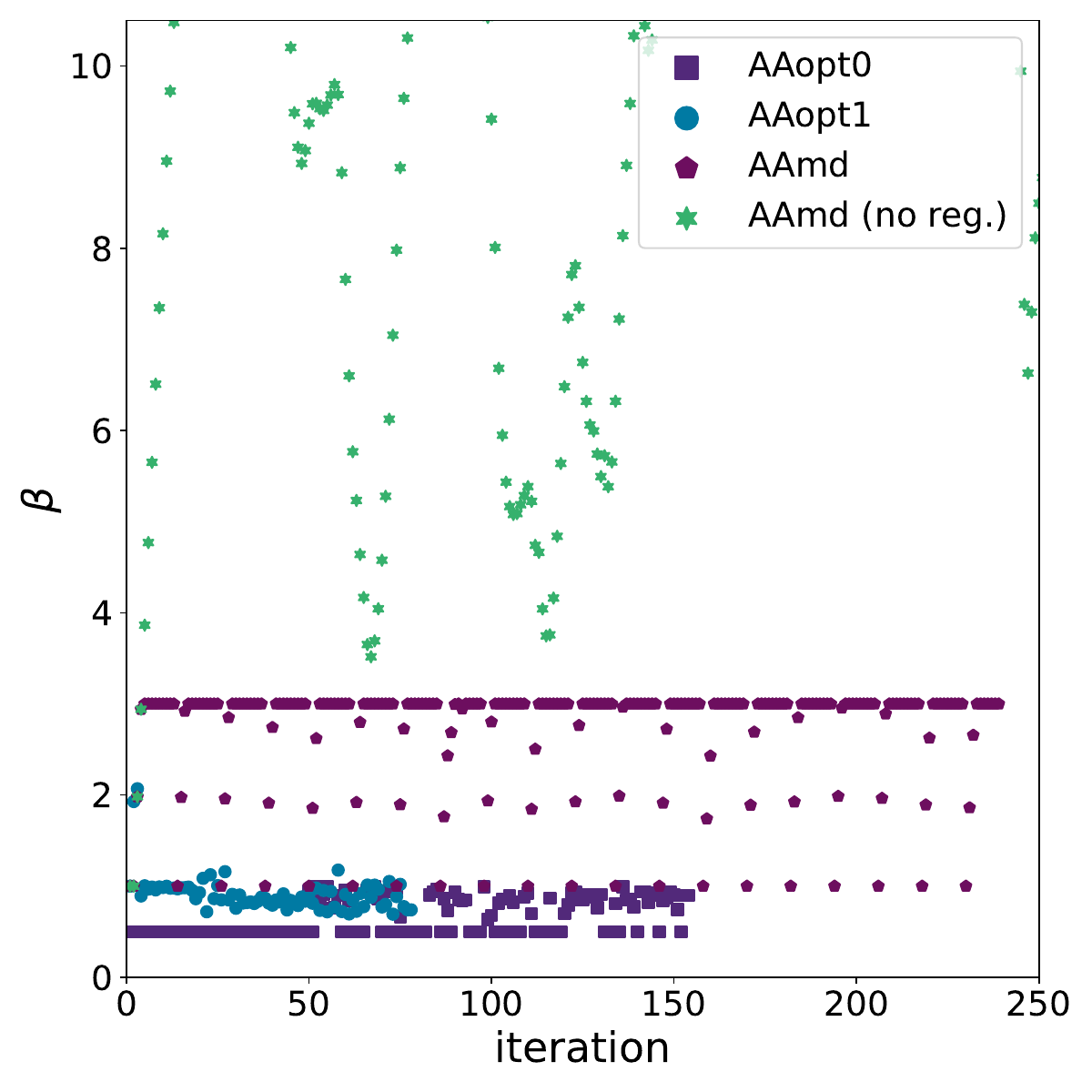}%
}
\caption{Relaxation parameters, Bratu problem, m=16}%
\label{fig:bratu1_relax}%
\end{minipage}%
\end{figure}%

Figure \ref{fig:bratu1_resid} shows the residuals for the Bratu problem for
non-composite versions of AAopt0, AAopt1, AAmd, and AAmd (no reg.). Like in
the linear model of Section \ref{sec:linear_example_AAopt}, AAopt1 needed the
fewest iterations to converge. However, AAmd's results are promising knowing
that requires a single mapping per iteration.

Figure \ref{fig:bratu1_relax} shows the corresponding relaxation parameters.
Looking at AAmd (no reg.), it is striking how without constraints or restarts,
$\hat{\beta}_{k}$ takes extremely large values and converges as slowly as AA
with stationary relaxation. With regularization, AAmd performs much better
than AA with constant relaxation.

\subsubsection{Composite AA}

The Bratu problem is estimated with the same specifications as in Section
\ref{sec:Bratu_non_composite}, except that AA is now composite with an AA1
inner loop (identified with a c, see Section
\ref{sec:Restarts_and_composite_AA}).%

\begin{figure}[tbp]%
\begin{minipage}{0.48\textwidth}%
\raisebox{-0cm}{\includegraphics[
height=7.0973cm,
width=7.0973cm
]%
{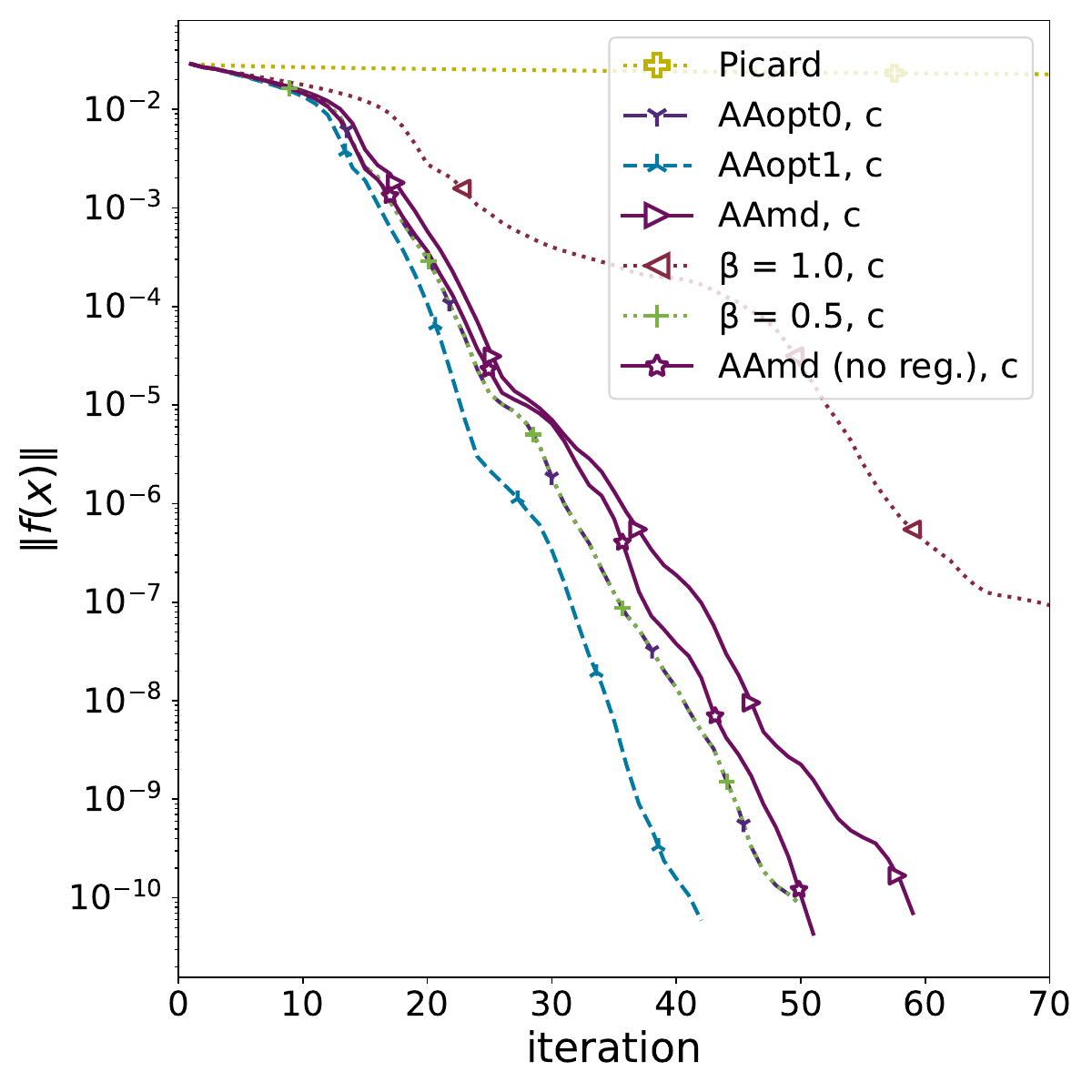}%
}
\caption{Residual norm, Bratu problem, m=16 with, composite AA}%
\label{fig:bratu2_resid}%
%

\end{minipage}%
\hfill%
\begin{minipage}{0.48\textwidth}%
\raisebox{-0cm}{\includegraphics[
height=7.0973cm,
width=7.0973cm
]%
{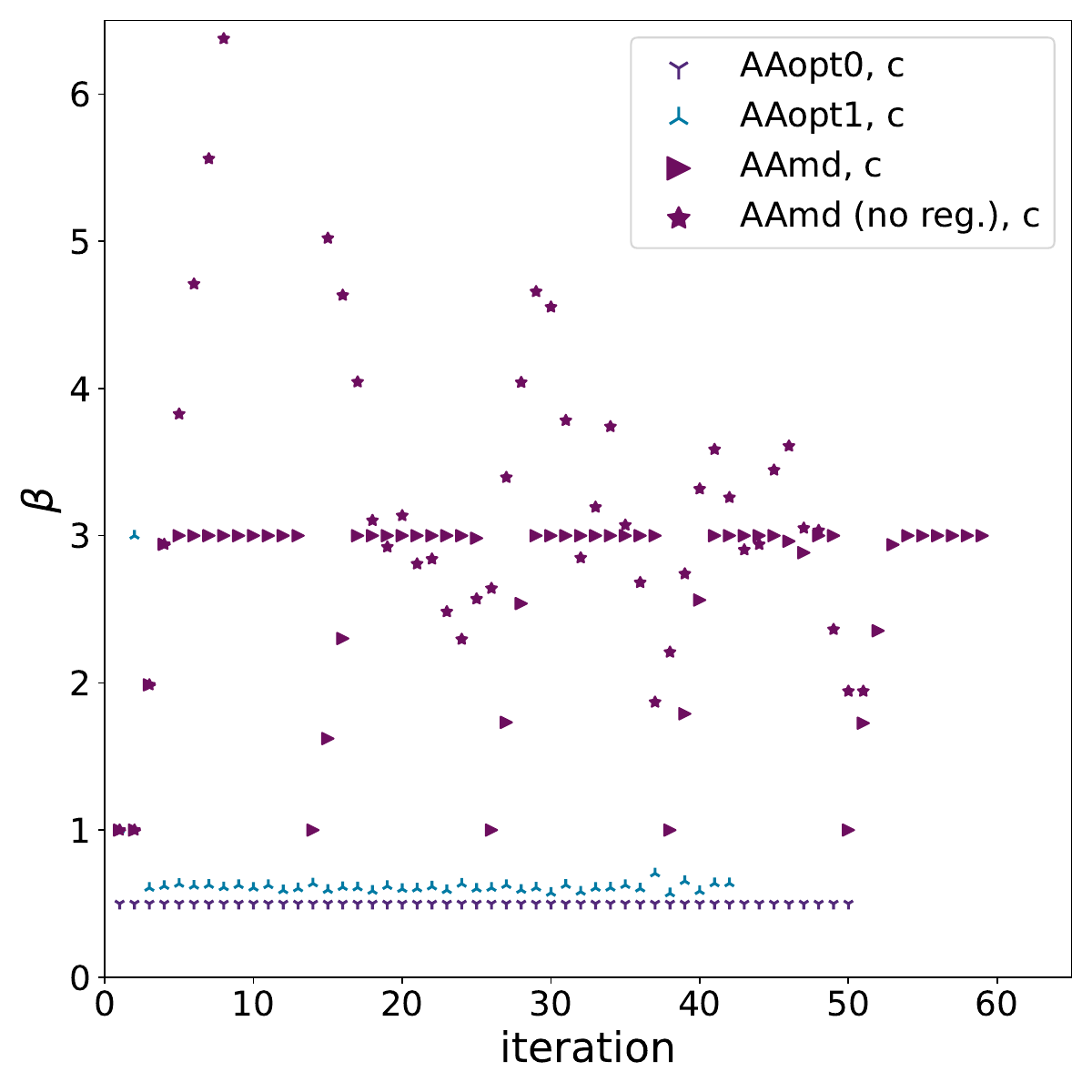}%
}
\caption{Relaxation parameters, Bratu problem, m=16, composite AA
}%
\label{fig:bratu2_relax}%
\end{minipage}%
\end{figure}%

Figures \ref{fig:bratu1_resid} show the residuals for the Bratu problem with
composite versions of AA and Figure \ref{fig:bratu1_relax} shows the
corresponding relaxation parameters. The horizontal axis refers to the number
of outer loop iterations. AAopt1, c converges in the fewest iterations,
although the difference with other AA algorithms is relatively small,
especially considering the fact that it requires 6 maps per iteration. AA
implementations that require only 3 mappings per iteration such as AAmd, c and
AA, c with constant $\beta=0.5$ also converged reasonably quickly.

\subsection{Experiments\label{sec:Experiments}}

Based on the previous example, the most promising AA implementations to
investigate further are AAopt1, AAmd and AAmd, c. Additionally, AAopt1\_4 and
AAopt1\_16, which update $\beta^{\ast}$ every 4 or every 16 iterations, are
also studied. These implementations are again compared with stationary AA with
$\beta=1$ and $\beta=0.5$, as well as their composite versions.

To select the optimal $m$ for each AA algorithm in each experiment, each
algorithm was implemented with $m\in\left\{  2,4,8,16,32,64\right\}  $ for 500
draws. The fastest in terms of computation speed (at the 0.75 quantile to
favor robustness) was selected. For the EM algorithm for the proportional
hazard model with interval censoring, the maximum $m=10$ was clearly the best
choice for all algorithms.

A total of 5000 draws were generated for the Bratu problem and the EM for the
proportional hazard model with interval censoring. To reduce simulation time,
only 1000 draws were calculated for the EM algorithm for admixed populations.
The stopping criterion was $\left\Vert f\left(  x_{k}\right)  \right\Vert
\leq10^{-8}$ for all applications, except the EM algorithm for admixed
populations which used $\left\Vert f\left(  x_{k}\right)  \right\Vert
\leq10^{-4}$. All algorithms that did not converge were stopped after 10 000 mappings.

Computation times are presented using the performance profiles of Dolan and
Mor\'{e} \cite{Dolan2002}. They show at which frequency each algorithm's time
was within a certain factor of the fastest algorithm for each draw. The
99\%\ confidence intervals for the median number of iterations (outer-loop
iterations for composite AA), mappings, and computation times are also
reported in tables, along with convergence rates. All computations were
single-threaded, performed on Julia 1.10.4 \cite{bezanson2017}, with a 13th
Gen Intel(R) Core(TM) i9-13900HX 2.20 GHz CPU running the Windows subsystem
for Linux.%

\begin{figure}[tbp]%
\begin{center}%
\begin{minipage}{0.48\textwidth}%
\raisebox{-0cm}{\includegraphics[
height=7.0973cm,
width=7.0973cm
]%
{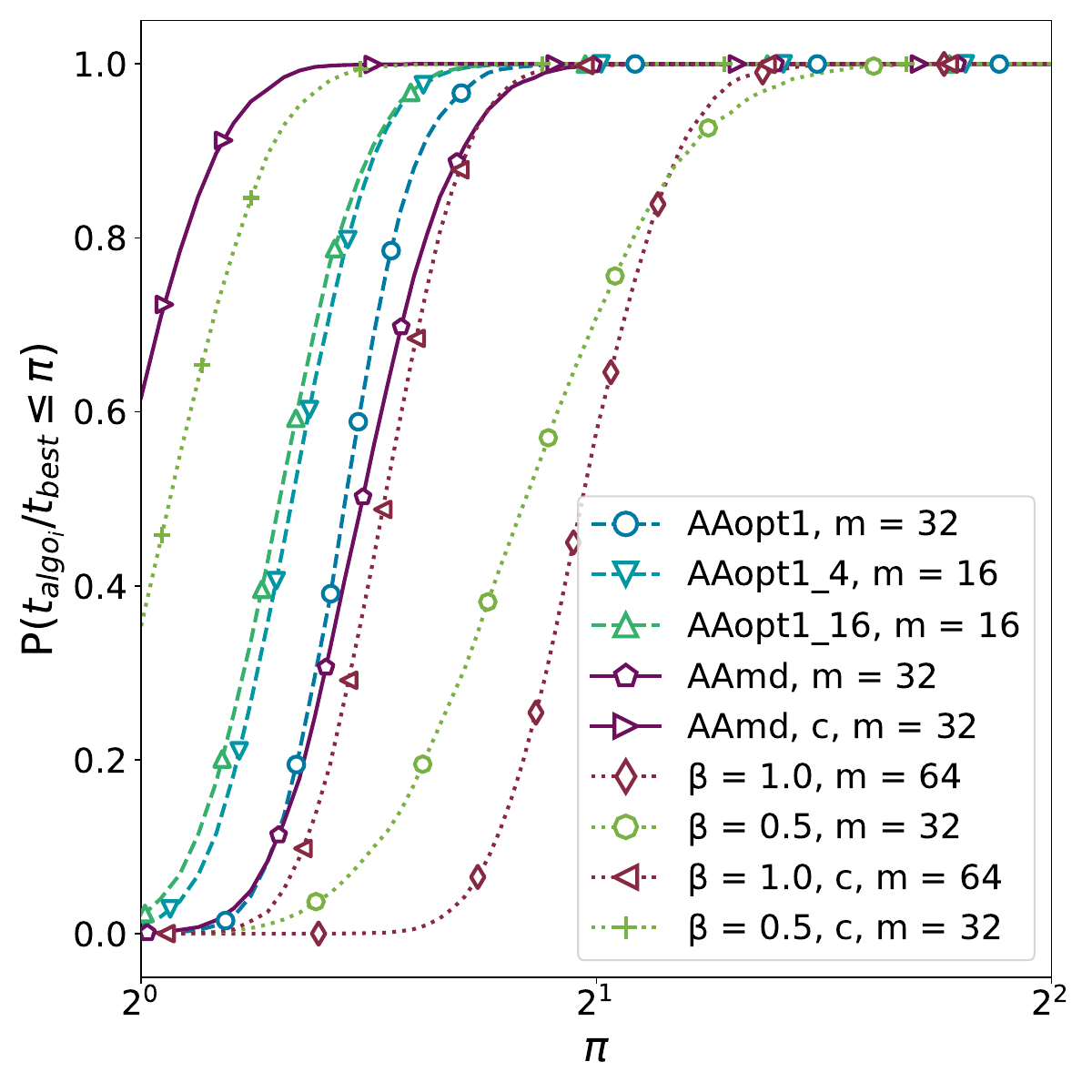}%
}
\caption{Performance profiles for the Bratu problem}%
\label{fig:bratu}%
\end{minipage}%
\end{center}%
\end{figure}%

For the Bratu problem, the starting values $x_{0}^{(i)}$ were drawn from
U(0,1) distributions. The results are reported in Figure \ref{fig:bratu} and
Table \ref{tab:bratu}. The composite AAmd was the fastest more than 60\% of
the time, requiring a median of 67 iterations and 199 maps to converge. The
runner-up was the composite AA with constant $\beta=0.5$. Interestingly,
non-composite AAmd was also faster than both non-composite AA with constant relaxation.%

\begin{table}[!htbp]%
\begin{minipage}{\textwidth}%
\caption{Median performances: Bratu problem}%
\label{tab:bratu}%
\begin{center}%
\begin{tabular}
[c]{lrrrr}\hline
&  &  &  & \vspace{-0.4cm}\\
Algorithm & \multicolumn{1}{c}{Iterations} & \multicolumn{1}{c}{\ \ \ Maps
\ \ \ } & \multicolumn{1}{c}{\ \ Time (ms) \ \ \ } &
\multicolumn{1}{c}{Converged\vspace{0.1cm}}\\\hline
& \multicolumn{1}{c}{} & \multicolumn{1}{c}{} & \multicolumn{1}{c}{} &
\multicolumn{1}{c}{\vspace{-0.4cm}}\\
AAopt1, $m=32$ & \multicolumn{1}{c}{(89, 89)} & \multicolumn{1}{c}{(263, 263)}
& \multicolumn{1}{c}{(148.22, 149.19)} & \multicolumn{1}{c}{1}\\
AAopt1\_4, $m=16$ & \multicolumn{1}{c}{(161, 162)} & \multicolumn{1}{c}{(241,
242)} & \multicolumn{1}{c}{(136.73, 137.67)} & \multicolumn{1}{c}{1}\\
AAopt1\_16, $m=16$ & \multicolumn{1}{c}{(203, 204)} & \multicolumn{1}{c}{(229,
230)} & \multicolumn{1}{c}{(134.28, 135.47)} & \multicolumn{1}{c}{1}\\
AAmd, $m=32$ & \multicolumn{1}{c}{(218, 219)} & \multicolumn{1}{c}{(218, 219)}
& \multicolumn{1}{c}{(152.02, 153.4)} & \multicolumn{1}{c}{1}\\
AAmd, c, $m=32$ & \multicolumn{1}{c}{(67, 67)} & \multicolumn{1}{c}{(199,
199)} & \multicolumn{1}{c}{(112.12, 112.82)} & \multicolumn{1}{c}{1}\\
AA, $\beta=1.0,$ $m=64$ & \multicolumn{1}{c}{(223, 224)} &
\multicolumn{1}{c}{(223, 224)} & \multicolumn{1}{c}{(212.83, 214.67)} &
\multicolumn{1}{c}{1}\\
AA, $\beta=0.5,$ $m=32$ & \multicolumn{1}{c}{(273, 278)} &
\multicolumn{1}{c}{(273, 278)} & \multicolumn{1}{c}{(194.53, 197.93)} &
\multicolumn{1}{c}{1}\\
AA, $\beta=1.0$, c$,m=64$ & \multicolumn{1}{c}{(87, 87)} &
\multicolumn{1}{c}{(259, 259)} & \multicolumn{1}{c}{(157.77, 158.96)} &
\multicolumn{1}{c}{1}\\
AA, $\beta=0.5$, c$,m=32$ & \multicolumn{1}{c}{(70, 71)} &
\multicolumn{1}{c}{(208, 211)} & \multicolumn{1}{c}{(116.79, 117.9)} &
\multicolumn{1}{c}{1\vspace{0.1cm}}\\\hline
&  &  &  & \vspace{-0.4cm}\\
\multicolumn{5}{l}{Note: 99\% conf. interval for the median. 2500
parameters.\vspace{0.1cm}}\\\hline
\end{tabular}%
\end{center}%
\end{minipage}%
\end{table}%
%

\begin{figure}[tbp]%
\begin{minipage}{0.48\textwidth}%
%

\raisebox{-0cm}{\includegraphics[
height=7.0973cm,
width=7.0973cm
]%
{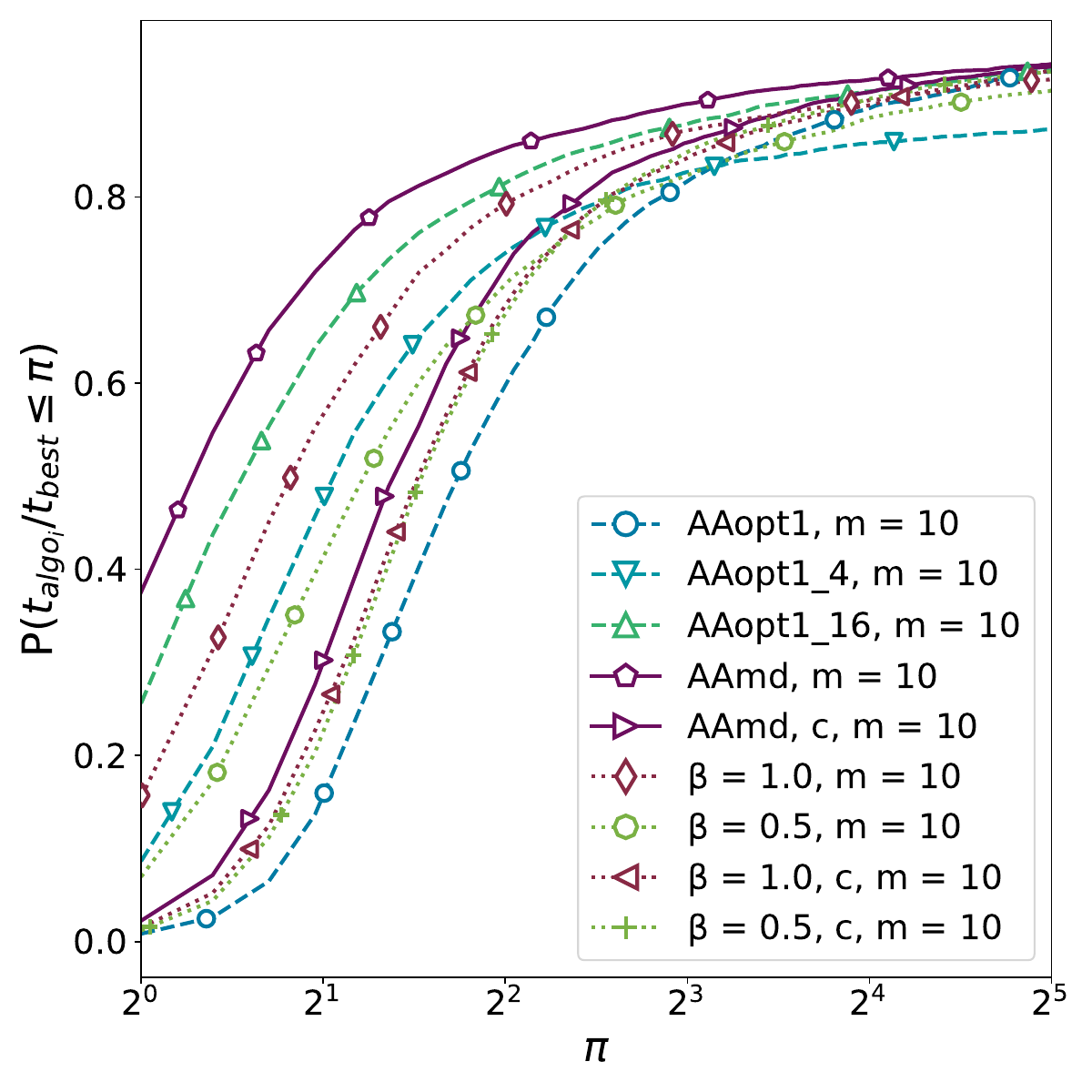}%
}
\caption
{Performance profiles for the EM algorithm for a proportional hazard model with interval censoring}%
\label{fig:censPHEM}%
\end{minipage}%
\hfill%
\begin{minipage}{0.48\textwidth}%
\raisebox{-0cm}{\includegraphics[
height=7.0973cm,
width=7.0973cm
]%
{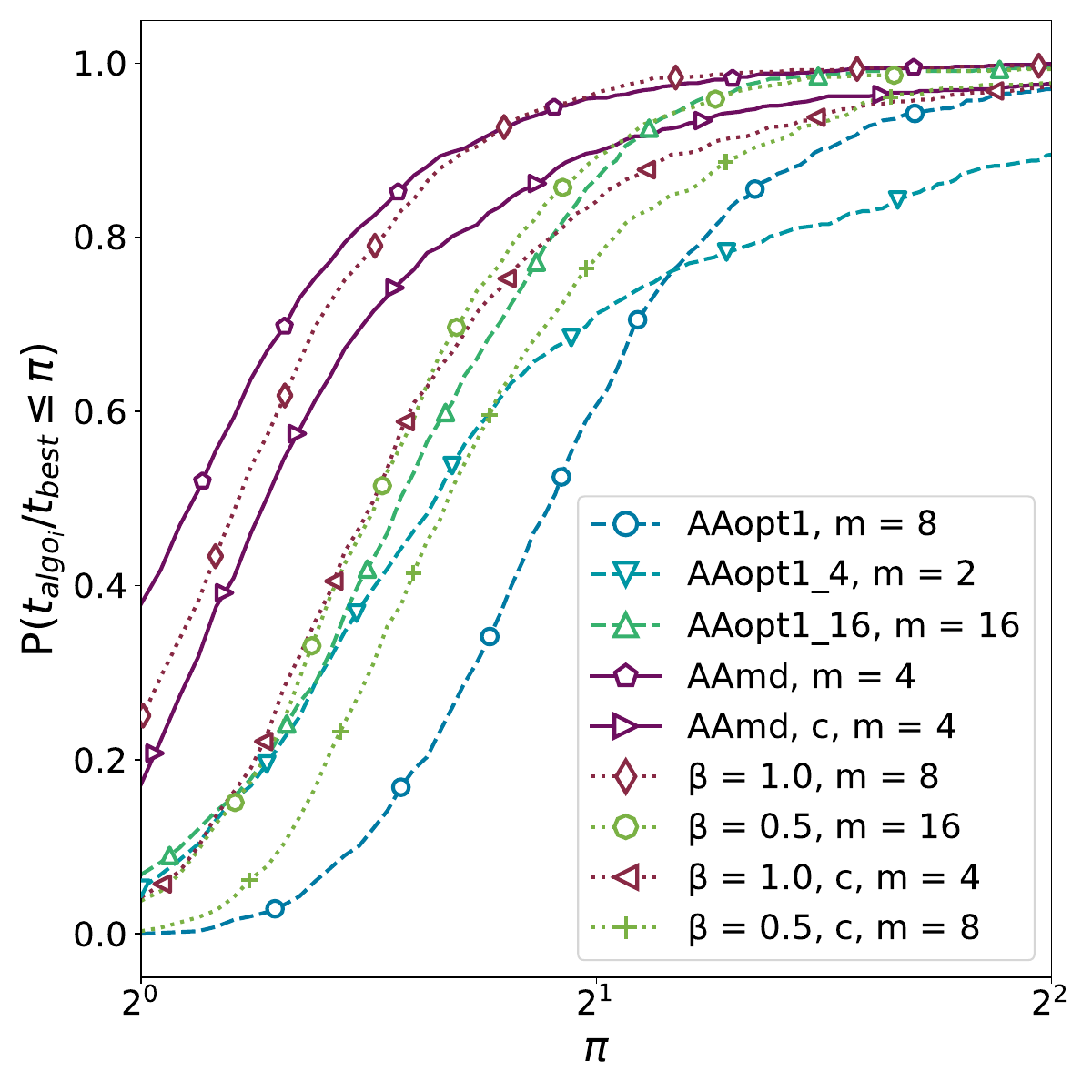}%
}
\caption{Performance profiles for the EM algorithm for admixed populations \newline
}%
\label{fig:ancestry}%
\end{minipage}%
\end{figure}%

The results of the numerical experiments with the EM algorithm for a
proportional hazard model with interval censoring are shown in Figure
\ref{fig:censPHEM} and Table \ref{tab:censPHEM}. AAmd was the fastest to
converge, with a median number of iterations and mapping evaluations of
approximately 99. AAopt1\_16 was a close second. Contrary to the Bratu
problem, composite AA clearly did not benefit from the AA1 inner loop compared
to non-composite AA. Still among composite AA, AAmd did slightly outperformed
those with constant relaxation.%

\begin{table}[!htbp]%
\begin{minipage}{\textwidth}%
\caption
{Median performances: EM algorithm for a proportional hazard model with interval censoring}%
\label{tab:censPHEM}%
\begin{center}%
\begin{tabular}
[c]{lrrrr}\hline
&  &  &  & \vspace{-0.4cm}\\
Algorithm & \multicolumn{1}{c}{Iterations} & \multicolumn{1}{c}{Maps} &
\multicolumn{1}{c}{Time (ms)} & \multicolumn{1}{c}{Converged\vspace{0.1cm}%
}\\\hline
& \multicolumn{1}{c}{} & \multicolumn{1}{c}{} & \multicolumn{1}{c}{} &
\multicolumn{1}{c}{\vspace{-0.4cm}}\\
AAopt1, $m=10$ & \multicolumn{1}{c}{(87, 93)} & \multicolumn{1}{c}{(257, 275)}
& \multicolumn{1}{c}{(71.59, 76.41)} & \multicolumn{1}{c}{0.962}\\
AAopt1\_4, $m=10$ & \multicolumn{1}{c}{(102, 110)} & \multicolumn{1}{c}{(152,
164)} & \multicolumn{1}{c}{(43.71, 47.56)} & \multicolumn{1}{c}{0.894}\\
AAopt1\_16, $m=10$ & \multicolumn{1}{c}{(99, 107)} & \multicolumn{1}{c}{(113,
121)} & \multicolumn{1}{c}{(32.8, 35.14)} & \multicolumn{1}{c}{0.96}\\
AAmd, $m=10$ & \multicolumn{1}{c}{(96, 102)} & \multicolumn{1}{c}{(96, 102)} &
\multicolumn{1}{c}{(28.43, 30.28)} & \multicolumn{1}{c}{0.958}\\
AAmd, c, $m=10$ & \multicolumn{1}{c}{(68, 72)} & \multicolumn{1}{c}{(202,
214)} & \multicolumn{1}{c}{(56.84, 60.03)} & \multicolumn{1}{c}{0.97}\\
AA, $\beta=1.0$, $m=10$ & \multicolumn{1}{c}{(129, 139)} &
\multicolumn{1}{c}{(129, 139)} & \multicolumn{1}{c}{(37.98, 40.41)} &
\multicolumn{1}{c}{0.952}\\
AA, $\beta=0.5$, $m=10$ & \multicolumn{1}{c}{(169, 184)} &
\multicolumn{1}{c}{(169, 184)} & \multicolumn{1}{c}{(48.79, 52.8)} &
\multicolumn{1}{c}{0.937}\\
AA, $\beta=1.0$, c, $m=10$ & \multicolumn{1}{c}{(75, 79)} &
\multicolumn{1}{c}{(223, 235)} & \multicolumn{1}{c}{(61.79, 65.68)} &
\multicolumn{1}{c}{0.962}\\
AA, $\beta=0.5$, c, $m=10$ & \multicolumn{1}{c}{(76, 80)} &
\multicolumn{1}{c}{(226, 239)} & \multicolumn{1}{c}{(63.13, 67.07)} &
\multicolumn{1}{c}{0.964}\\\hline
&  &  &  & \vspace{-0.4cm}\\
\multicolumn{5}{l}{Note: 99\% conf. interval for the median. 10
parameters.\vspace{0.1cm}}\\\hline
\end{tabular}%
\end{center}%
\end{minipage}%
\end{table}%
%

\begin{table}[!htbp]%
\begin{minipage}{\textwidth}%
\caption{Median performances: EM algorithm for admixed populations}%
\label{tab:ancestry}%
\begin{center}%
\begin{tabular}
[c]{lrrrr}\hline
&  &  &  & \vspace{-0.4cm}\\
Algorithm & \multicolumn{1}{c}{Iterations} & \multicolumn{1}{c}{Maps} &
\multicolumn{1}{c}{Time (s)} & \multicolumn{1}{c}{Converged\vspace{0.1cm}%
}\\\hline
& \multicolumn{1}{c}{} & \multicolumn{1}{c}{} & \multicolumn{1}{c}{} &
\multicolumn{1}{c}{\vspace{-0.4cm}}\\
AAopt1, $m=8$ & \multicolumn{1}{c}{(245, 253)} & \multicolumn{1}{c}{(731,
755)} & \multicolumn{1}{c}{(4.14, 4.28)} & \multicolumn{1}{c}{1}\\
AAopt1\_4, $m=2$ & \multicolumn{1}{c}{(355, 374)} & \multicolumn{1}{c}{(533,
560)} & \multicolumn{1}{c}{(3.4, 3.68)} & \multicolumn{1}{c}{1}\\
AAopt1\_16, $m=16$ & \multicolumn{1}{c}{(434, 454)} & \multicolumn{1}{c}{(488,
512)} & \multicolumn{1}{c}{(3.38, 3.55)} & \multicolumn{1}{c}{1}\\
AAmd, $m=4$ & \multicolumn{1}{c}{(363, 379)} & \multicolumn{1}{c}{(363, 379)}
& \multicolumn{1}{c}{(2.55, 2.65)} & \multicolumn{1}{c}{1}\\
AAmd, c, $m=4$ & \multicolumn{1}{c}{(161, 166)} & \multicolumn{1}{c}{(481,
496)} & \multicolumn{1}{c}{(2.76, 2.86)} & \multicolumn{1}{c}{0.999}\\
AA, $\beta=1.0$, $m=8$ & \multicolumn{1}{c}{(377, 389)} &
\multicolumn{1}{c}{(377, 389)} & \multicolumn{1}{c}{(2.67, 2.77)} &
\multicolumn{1}{c}{1}\\
AA, $\beta=0.5$, $m=16$ & \multicolumn{1}{c}{(449, 461)} &
\multicolumn{1}{c}{(449, 461)} & \multicolumn{1}{c}{(3.21, 3.35)} &
\multicolumn{1}{c}{1}\\
AA, $\beta=1.0$, c, $m=4$ & \multicolumn{1}{c}{(182, 189)} &
\multicolumn{1}{c}{(544, 565)} & \multicolumn{1}{c}{(3.16, 3.3)} &
\multicolumn{1}{c}{1}\\
AA, $\beta=0.5$, c, $m=8$ & \multicolumn{1}{c}{(207, 214)} &
\multicolumn{1}{c}{(619, 640)} & \multicolumn{1}{c}{(3.57, 3.71)} &
\multicolumn{1}{c}{1}\\\hline
&  &  &  & \vspace{-0.4cm}\\
\multicolumn{5}{p{14cm}}{Note: 99\% conf. interval for the median. 750
parameters. Tolerance set to $\left\Vert f(x)\right\Vert \leq10^{-4}$. 1000
draws.\vspace{0.1cm}}\\\hline
\end{tabular}
\vspace{0.1cm}%
\end{center}%
\end{minipage}%
\end{table}%

Finally, Figure \ref{fig:ancestry} and Table \ref{tab:ancestry} summarize the
results of the numerical experiments with the EM algorithm for admixed
populations. AAmd was again the fastest, although its edge over stationary AA
with $\beta=1$ was small. Composite AAmd was in third place with a clear edge
over both composite AA with constant relaxation.

\subsection{Discussion}

The results shown in Sections \ref{sec:Examples} and \ref{sec:Experiments}
show that adaptive relaxation parameters can clearly reduce the number of
iterations needed for AA to converge. Since maps are often computationally
expensive, a relaxation strategy like AAmd which do not require additional
maps per iteration offers the most predictable benefits. AA with a well-chosen
constant relaxation parameters can approach its performances, but the results
can vary greatly with the choice of $\beta$. A clear advantage of adaptive
relaxation is that they do not require tuning.

Additional numerical experiments (not presented here) were conducted to
explore the impact of each parameter on AAmd. Setting $\beta_{\max}$ higher
than 3 generally made little difference. A smaller $\delta$ (the allowed
discrepancy between $\hat{\beta}_{k}$ and $\hat{\beta}_{k+1}$) made AAmd
slightly slower by falling back on $\beta_{\text{default}}$ more frequently.
Conversly, a larger $\delta$ may not be advisable for highly nonlinear
applications for which optimal relaxation parameters could vary considerably
between iterations. A key parameter is $P$, the number of consecutive
$\hat{\beta}$ values above $1$ before a restart with $\beta=1$. A reasonable
range is $3\leq P\leq10$. Without periodic resets of $\hat{\beta}$, AAmd's
convergence was often slower than that of stationary AA, as seen in Section
\ref{sec:Examples}.

Composite AA was successful in accelerating the estimation of the Bratu
problem but performed poorly for the EM algorithm. Among non-composite AA
implementations, AAopt1 often converged in fewer iterations, though it was not
competitive in terms of computation time. By not recomputing $\beta^{\ast}$ at
each iteration, AAopt1\_4 and AAopt1\_16 were often faster than AA with
constant mappings with $\beta=1$ or $\beta=0.5$, but did not match the speed
of AAmd.

Since AA can be made efficient by using QR decomposition and Givens rotations
to solve the internal linear optimization problem, the fastest algorithms were
clearly those requiring the the fewest mappings to converge. This could change
for applications with truly negligible mapping computation times. However, for
such applications, lighter algorithms that do not require solving any linear
systems, such as SQUAREM \cite{Varadhan2008} or ACX \cite{LepageSaucier2024},
would likely be faster than AA.

\section{Conclusion\label{sec:Conclusion}}

Two adaptive relaxation schemes for Anderson acceleration have been proposed
for convergent fixed-point applications. Both are demonstrated to improve
Anderson acceleration's convergence for a linear contraction mapping.

The first scheme, AAopt1, uses two extra maps to compute a locally optimal
relaxation parameter. Convergence is accelerated by reusing the same maps a
second time to compute the next iterate. Furthermore, by reusing the same
relaxation parameter over multiple iterations, AAopt1\_T always outperforms
AAopt1 in terms of speed, though AA with a constant, well-chosen relaxation
parameter can still be faster.

The second proposed scheme, AAmd, requires fewer iterations to converge while
needing minimal extra calculation. As a result, it outperformed all other AA
specifications across all tests in terms of computation time. Interestingly,
AAmd's adaptive relaxation parameters are frequently above one, an unexpected
result in the context of the AA literature that warrants further investigation.

\bibliographystyle{aaai-named}

\end{document}